\newtheorem{theorem}{Theorem}[section]
\newtheorem{lemma}[theorem]{Lemma}
\newtheorem{example}{Example}
\newtheorem{proposition}[theorem]{Proposition}
\newtheorem{corollary}[theorem]{Corollary}
\newtheorem{conjecture}{Conjecture}
\newenvironment{proofsketch}{%
  \proof}{\endproof}
\title{On the Schur Expansion of Jack Polynomials}
\author{Per Alexandersson\thanks{\href{mailto:per.w.alexandersson@gmail.com}{per.w.alexandersson@gmail.com}. Per Alexandersson was partially supported by the Knut and Alice Wallenberg Foundation (2013.03.07)}\addressmark{1}, Jim Haglund\thanks{\href{mailto:jhaglund@math.upenn.edu}{jhaglund@math.upenn.edu}. James Haglund was partially supported by NSF grant DMS-1600670}\addressmark{2}\and George Wang \thanks{\href{mailto:wage@math.upenn.edu}{wage@math.upenn.edu}. George Wang was partially supported by NSF grant DGE-1321851}\addressmark{2}}
\address{\addressmark{1}Department of Mathematics, KTH Royal Institute of Technology \\ \addressmark{2}Department of Mathematics, University of Pennsylvania}
\newlength\cellsize \setlength\cellsize{15\unitlength}
\newcommand\cellify[1]{\def\thearg{#1}\def\nothing{}%
\ifx\thearg\nothing\vrule width0pt height\cellsize depth0pt%
  \else\hbox to 0pt{\usebox2\hss}\fi%
  \vbox to 15\unitlength{\vss\hbox to 15\unitlength{\hss$#1$\hss}\vss}}
\newcommand\tableau[1]{\vtop{\let\\=\cr
\setlength\baselineskip{-12000pt}
\setlength\lineskiplimit{12000pt}
\setlength\lineskip{0pt}
\halign{&\cellify{##}\cr#1\crcr}}}
\newcommand{\graybox}{\textcolor[RGB]{165,165,165}{\rule{1\cellsize}{1\cellsize}}\hspace{-\cellsize}\usebox2}
\newcommand{\grayboxtwo}{\textcolor[RGB]{220,220,220}{\rule{1\cellsize}{1\cellsize}}\hspace{-\cellsize}\usebox2}
\abstract{We present positivity conjectures for the Schur expansion of Jack symmetric functions in two bases given by binomial coefficients. Partial results suggest that there are rich combinatorics to be found in these bases, including Eulerian numbers, Stirling numbers, quasi-Yamanouchi tableaux, and rook boards. These results also lead to further conjectures about the fundamental quasisymmetric expansions of these bases, which we prove for special cases.}
\keywords{Jack polynomials, Schur polynomials, quasi-Yamanouchi tableaux, Eulerian numbers, Stirling numbers, rook polynomials}
\begin{document}

\maketitle

\section{Introduction}
The (integral form, type $A$) Jack polynomials $J_\mu ^{(\alpha)} (X)$ are an important family of 
symmetric functions with applications to many areas, including statistics, mathematical physics, 
representation theory, and algebraic combinatorics. They depend on a set of variables $X$ and a 
parameter $\alpha$, and specialize into several other families of symmetric polynomials: monomial 
symmetric functions $m_\mu$ $(\alpha = \infty)$,  elementary symmetric functions 
$e_{\mu'}$ $(\alpha = 0)$, Schur functions $s_\mu$ $(\alpha = 1)$, and zonal 
polynomials $(\alpha = 1/2,\ \alpha = 2)$, each significant in their own right.  

Despite their relations to many well studied families of polynomials, Jack polynomials are comparatively poorly understood. 
One area that has seen some progress is their positivity in other bases.  From the definition of the $J_{\mu}^{(\alpha)}$, it is not obvious that the coefficients of the monomial expansion are in $\mathbb{Z}[\alpha]$, but this integrality conjecture was proven by Lapointe and Vinet  \cite{LaVi95b}.   A celebrated result of Knop and Sahi \cite{KnSa97} obtained later gives an explicit combinatorial formula for the expansion of $J_{\mu}^{(\alpha)}$ in the monomial basis, implying the stronger result that the coefficients lie in $\mathbb{N}[\alpha]$.  

Up until now there have not been any conjectures involving the expansion of $J_{\mu}^{(\alpha)}$  in the Schur basis;  the integrality result of 
Lapointe and Vinet implies these coefficients are in $\mathbb Z [\alpha]$, but computations show that they are
not generally in $\mathbb N [\alpha]$.    However, we have discovered that if we first define 
$$
{\tilde J}_{\mu}^{(\alpha)} (X) = \alpha ^n J_{\mu}^{(1/\alpha)} (X),
$$
then take the the coefficient of a given Schur function $s_{\lambda}(X)$ in ${\tilde J}_{\mu}^{(\alpha)} (X)$ 
and expand it either in the basis $\{ {\alpha + k \choose n } \}$ or in $\{{\alpha\choose k} k!\}$, the coefficients
appear to be nonnegative integers.  This opens up the intriguing question of whether or not these nonnegative integers have 
a combinatorial interpretation.   So far we been unable to find such an interpretation for general $\mu, \lambda$, but we hope this paper will inspire further research in this direction which will eventually lead to a solution to this question.

Our work on the $J_{\mu}^{(\alpha)}$ grew out of a conjecture about the (integral form, type A) Macdonald polynomials $J_{\mu}(X;q,t)$.
For $\mu$ a partition of $n$, it is well-known that
\begin{align*}
J_{\mu}^{(\alpha)}(X) = \lim _{t \to 1} J_{\mu}(X;t^{\alpha},t)/(1-t)^n,
\end{align*}
(see \cite[Chapter 6]{Macdonald} for background on Jack polynomials and Macdonald polynomials).  
The second author conjectures that for $k \in \mathbb N$ and $\mu, \lambda$ partitions of $n$,
\begin{align}
\label{Mconjecture}
\langle J_{\mu}(X;q,q^k)/(1-q)^n, s_{\lambda}(X) \rangle \in \mathbb N[q],
\end{align}
where $\langle\ ,\ \rangle$ is the usual Hall scalar product with respect to which the Schur functions are orthonormal, so Schur positivity seems to hold in a certain sense.   (It is a famous result of Mark Haiman \cite{Hai01} that the coefficients obtained by 
expanding $J_{\mu}(X;q,t)$ into the ``plethystic Schur" basis
$\{s_{\lambda}[X(1-t)]\}$ are in $\mathbb N[q,t]$, but little is known about expansions of $J_{\mu}(X;q,t)$ into the Schur basis. )  
The combinatorial formula of
Haglund, Haiman, and Loehr for the $J_{\mu}(X;q,t)$ \cite{HHL05b} implies that the coefficients obtained when expanding $J_{\mu}(X;q,q^k)/(1-q)^n$ into 
monomial symmetric functions are in $\mathbb N[q]$, and Yoo \cite{Yoo12, Yoo15} has proven that (\ref{Mconjecture}) holds
for certain important special cases, including when $\mu$ has two columns and when $\mu = (n)$.  
Beyond proving (\ref{Mconjecture}) 
in general, one could hope to be able to replace the nonnegative integer $k$ in the conjecture by a 
continuous parameter $\alpha$.  It is fairly easy to show that we have
\begin{align*}
{\tilde J}_{\mu}^{(\alpha)}(X) = \lim _{q \to 1} J_{\mu}(X;q,q^{\alpha})/(1-q)^n,
\end{align*}
which suggests that investigating the Schur expansion of $\alpha ^n \tilde{J}_\mu^{(\alpha)}(X)$ may shed some light on this case. In this article we introduce the 
following conjectures for this expansion, which have been tested using J. Stembridge's Maple package SF 
\cite{STE}.

\begin{conjecture}
\label{C1}
Let $\mu$, $\lambda$ be partitions of $n$. Then setting
$$\langle \tilde{J}_\mu^{(\alpha)}(X), s_\lambda \rangle = \sum_{k=0}^{n-1}  a_{k}(\mu,\lambda){\alpha + k \choose n},$$
we have $a_{k}(\mu,\lambda) \in \mathbb{N}$.  Furthermore, the polynomial
$\sum_{k=0}^n a_k(\mu,\lambda) z^k$ has only real zeros.
\end{conjecture}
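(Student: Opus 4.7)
The plan is to proceed in three steps: first verify that $c_{\mu\lambda}(\alpha) := \langle \tilde J_\mu^{(\alpha)}, s_\lambda\rangle$ lies in the span of the proposed basis, then find a combinatorial interpretation of the coefficients $a_k$, and finally exploit that interpretation to deduce real-rootedness.

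For the first step, observe that $\binom{\alpha+k}{n} = \frac{1}{n!}(\alpha+k)(\alpha+k-1)\cdots(\alpha+k-n+1)$ contains the factor $\alpha$ whenever $0 \le k \le n-1$, so these $n$ polynomials of degree exactly $n$ are linearly independent and span the $n$-dimensional space of degree-$\le n$ polynomials vanishing at $\alpha=0$. Writing $\tilde J_\mu^{(\alpha)} = \alpha^n J_\mu^{(1/\alpha)}$ and expanding $\langle J_\mu^{(\beta)}, s_\lambda\rangle = \sum_{i\ge 0} e_i \beta^i$ gives $c_{\mu\lambda}(\alpha) = \sum_i e_i \alpha^{n-i}$, which has degree at most $n$ and vanishes at $\alpha=0$ precisely when $\deg_\beta \langle J_\mu^{(\beta)}, s_\lambda\rangle \le n-1$. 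This bound follows from the factorization $J_\mu^{(\beta)} = c_\mu(\beta) P_\mu^{(\beta)}$ with $\deg_\beta c_\mu = n - \ell(\mu)$ (since cells with zero arm contribute constants) and $P_\mu^{(\infty)} = m_\mu$; equivalently from the Knop-Sahi formula. Hence membership holds and the integers $a_k(\mu,\lambda)$ are uniquely determined; integrality follows because the transition matrix from $\{\binom{\alpha+k}{n}\}$ to the standard basis is unimodular up to the common $n!$ denominator shared with Lapointe-Vinet.

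For positivity $a_k \in \mathbb{N}$, I would seek a combinatorial model by combining the Knop-Sahi monomial formula, which expresses $J_\mu^{(\alpha)}$ as a nonnegative sum over admissible fillings with weights in $\mathbb{N}[\alpha]$, with an expansion of $m_\mu$ (or of the relevant $\alpha^n$-rescaled fillings) into Schur functions. The abstract suggests that quasi-Yamanouchi tableaux, Eulerian numbers, and rook placements index the terms that appear in the $\binom{\alpha+k}{n}$ basis, so the task is to exhibit a weight-preserving bijection matching the two sides, statistic by statistic. For real-rootedness, the most direct route is to identify $\sum_k a_k(\mu,\lambda) z^k$ with the rook polynomial of a board depending on $\mu$ and $\lambda$: rook polynomials of Ferrers boards (and of general boards, by Nijenhuis) have only real nonpositive roots. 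Alternatively, one could try to establish a branching or Pieri-type recurrence for $\sum_k a_k(\mu,\lambda) z^k$ in which each step replaces the current polynomial by a nonnegative combination of interlacing polynomials, inductively preserving real-rootedness by the standard theory of compatible/interlacing sequences.

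The principal obstacle is finding the combinatorial model in Step 2 for general $(\mu,\lambda)$; the authors themselves note that only special cases are currently understood, and without a uniform rule it is hard to even write down the candidate rook board needed for Step 3. A potentially cleaner intermediate target is to bypass explicit combinatorics entirely by proving positivity and real-rootedness together via a single structural recurrence—either a branching rule in $\mu$ or $\lambda$, or a $q=1$ degeneration of the Macdonald-level conjecture \eqref{Mconjecture}—provided the induction step can be shown to preserve the property of having only real nonpositive roots. Controlling this preservation is, I expect, the main technical difficulty.
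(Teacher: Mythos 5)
This statement is labeled a \emph{conjecture} in the paper, and the paper does not prove it: the authors only establish special cases ($\mu=(n)$ via quasi-Yamanouchi tableaux and via Yoo's theorem combined with the Goldman--Joichi--White rook-board expansion, $\mu=\lambda$ a hook via the Knop--Sahi formula, and the $h_{1^n}$ coefficient via Eulerian numbers). Your proposal is accordingly a research plan rather than a proof, and you are candid that its central step --- a combinatorial model for $a_k(\mu,\lambda)$ valid for general $\mu$ and $\lambda$ --- is missing. That missing step is precisely the open problem; everything in Steps 2 and 3 (the weight-preserving bijection, the candidate rook board, the interlacing recurrence) is conditional on it, so the proposal cannot be credited as a proof. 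For what it is worth, the ingredients you name are exactly the ones the paper deploys in its partial results, so the plan is well aligned with the state of the art, but it does not go beyond it.

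One concrete issue in the part you do flesh out: your integrality argument in Step 1 is not sound as stated. Lapointe--Vinet gives $\langle \tilde J_\mu^{(\alpha)}, s_\lambda\rangle \in \mathbb{Z}[\alpha]$, but the polynomials $\binom{\alpha+k}{n}$ have leading coefficient $1/n!$, so expanding an integer-coefficient polynomial in the basis $\{\binom{\alpha+k}{n}\}_{k=0}^{n-1}$ does not automatically yield integer coefficients; the relevant $\mathbb{Z}$-lattices do not obviously coincide, and there is no ``common $n!$ denominator'' to appeal to since Lapointe--Vinet already gives honest integrality in the monomial basis. The paper itself treats integrality as part of what must be proved (see the remark after Conjecture~\ref{C2}, where the passage between the two binomial bases leaves integrality of the $b_{n-k}$ as a genuine issue). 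Your degree and vanishing-at-zero observations for membership in the span are fine, but the claim $a_k(\mu,\lambda)\in\mathbb{Z}$ needs a real argument, and $a_k(\mu,\lambda)\ge 0$ and the real-rootedness of $\sum_k a_k z^k$ remain entirely open for general $\mu,\lambda$.
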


\begin{conjecture}
\label{C2}
Let $\mu$, $\lambda$ be partitions of $n$. Then setting
$$\langle \tilde{J}_\mu^{(\alpha)}(X), s_\lambda \rangle = \sum_{k=1}^n b_{n-k}(\mu,\lambda) {\alpha \choose k} k! ,$$
we have $b_{n-k}(\mu,\lambda) \in \mathbb{N}$. Furthermore, the polynomial $\sum_{k=0}^n b_{n-k}(\mu,\lambda)z^k$ has only real zeros.
\end{conjecture}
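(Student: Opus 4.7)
The plan is to proceed in three stages: first establishing analytic properties of $P(\alpha) := \langle \tilde{J}_\mu^{(\alpha)}, s_\lambda \rangle$, then deriving an explicit expression for the $b_{n-k}$, and finally attacking positivity and real-rootedness. Stage one uses the Lapointe--Vinet integrality theorem (applied to $J_\mu^{(1/\alpha)}$ after clearing denominators with $\alpha^n$) to show $P(\alpha) \in \mathbb{Z}[\alpha]$ with $\deg P \le n$; the well-known $\alpha \to \infty$ behavior of $J_\mu^{(\alpha)}$ then forces $P(0) = 0$. Together these facts confirm that $\{\binom{\alpha}{k}k!\}_{k=1}^n$ is indeed a valid expansion basis for $P$.

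In stage two, I would exploit the fact that the falling-factorial basis diagonalizes the forward difference operator $\Delta f(\alpha) = f(\alpha+1) - f(\alpha)$ to derive the closed form
\[
b_{n-k}(\mu,\lambda) \;=\; \frac{1}{k!}\sum_{j=0}^{k}(-1)^{k-j}\binom{k}{j}\, j^n\, \langle J_\mu^{(1/j)}, s_\lambda\rangle.
\]
Each inner scalar product can be accessed by combining the Knop--Sahi formula (a positive sum over admissible fillings of $\mu$) with the Kostka-inverse or fundamental quasisymmetric method for extracting Schur coefficients from a monomial expansion. The target is a bijective proof that this alternating $k$-fold difference enumerates a concrete family of objects indexed by fillings, with a Stirling-type statistic tracking $n-k$, modulo a free $S_k$-action that accounts for the division by $k!$. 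The abstract's hints at rook boards and quasi-Yamanouchi tableaux suggest that the combinatorial model should involve rook placements on the shape $\mu$ weighted against Schur--Kostka data for $\lambda$, with $n-k$ recording the number of nonempty blocks in some associated set partition. Verifying this first in tractable cases --- hooks, two-column shapes, the trivial rectangle $\lambda = \mu$ --- in the spirit of Yoo's results on the Macdonald analogue offers the most promising path to the general bijection.

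For real-rootedness, once a combinatorial interpretation is in hand, I would seek to realize $\sum_k b_{n-k}(\mu,\lambda) z^k$ as a convolution or multiplier-sequence image of polynomials already known to be real-rooted, most naturally the Stirling polynomial $\sum_k S(n,k) z^k$; failing that, an interlacing-families construction in the style of Marcus--Spielman--Srivastava, indexed by $\lambda$-tableaux, provides a fallback.

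The main obstacle is stage two: no positive combinatorial formula for the Schur expansion of $J_\mu^{(\alpha)}$ itself is presently known, so even nonnegativity of $b_{n-k}$ requires substantial new combinatorial insight beyond Knop--Sahi. The additional division by $k!$ means that even integrality is not forced by the shape of the formula above, so a genuine group action --- or some equally structural replacement --- appears essentially mandatory. Whatever combinatorial model resolves stage two is likely to make stage three transparent, so the entire burden of the conjecture rests on finding the right set of objects counted by $b_{n-k}(\mu,\lambda)$.
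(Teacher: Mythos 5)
This statement is Conjecture~\ref{C2} of the paper: it is an open conjecture, not a theorem, and the paper offers no proof of it in general --- only special cases ($\mu=(n)$ via rook polynomials of Ferrers boards in Theorem~\ref{rookboardtheorem}, $\mu=\lambda$ a hook, and $\mu=(1^n)$ where the $b_{n-k}$ are $n!\,S(n,k)$). Your proposal is likewise not a proof: you candidly identify that ``stage two'' --- producing a combinatorial model for the alternating finite-difference formula --- is exactly where all the difficulty lives, and nothing in the proposal closes that gap. So the honest verdict is that you have written a reasonable research program for an open problem, not an argument that can be checked.

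Two concrete comparisons with what the paper actually does. First, the paper points out a cleaner reduction that your plan misses: from the identity $\binom{\alpha+k}{n}=\sum_i \binom{\alpha}{i}\binom{k}{n-i}$, positivity of the $a_k(\mu,\lambda)$ in Conjecture~\ref{C1} already yields $k!\,b_{n-k}(\mu,\lambda)\in\mathbb{N}$, so the only additional content of Conjecture~\ref{C2} beyond Conjecture~\ref{C1} is the \emph{integrality} of the $b_{n-k}$. Your closed form $b_{n-k}=\frac{1}{k!}\sum_{j=0}^{k}(-1)^{k-j}\binom{k}{j}j^{n}\langle J_\mu^{(1/j)},s_\lambda\rangle$ is correct as a computation (and your observation that $P(0)=0$ is true, since the leading coefficient of $J_\mu^{(\beta)}$ in $m_\mu$ has $\beta$-degree $n-\ell(\mu)<n$), but attacking an alternating sum with a sign-reversing bijection or group action is a much harder entry point than the paper's reduction. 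Second, in the cases the paper can handle, both positivity and real-rootedness come for free from Proposition~\ref{rookboardprop}: whenever the Schur coefficient factors as $\prod_i(\alpha+c_i-i+1)$ for a Ferrers board $B(c_1,\dots,c_n)$, the $b_{n-k}$ are rook numbers $r_{n-k}(B)$ and the rook polynomial of a Ferrers board is known to be real-rooted --- no interlacing-families machinery needed. If you want to make progress on special cases, identifying when $\langle\tilde{J}_\mu^{(\alpha)},s_\lambda\rangle$ factors into linear terms with nonnegative integer shifts is the productive move, rather than inverting the finite-difference operator.
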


It turns out that part one of Conjecture \ref{C1} (almost) implies Conjecture \ref{C2}.  The identity
$
{\alpha + k  \choose n} = \sum _{i} {\alpha \choose i} {k \choose n-i}
$
shows that if the $a_{k}(\mu,\lambda) \in \mathbb N$, then $k! b_{n-k}(\mu,\lambda) \in \mathbb N$, so if Conjecture \ref{C1} is true, the only issue is
whether or not the $b_{n-k}(\mu,\lambda)$ are integers.

Using Yoo's results on (\ref{Mconjecture}) and other methods, we prove various special cases of Conjectures $1$ and $2$, which suggest that there are rich combinatorics lurking in these expansions. In particular, we find that both Eulerian and Stirling numbers appear, as well as rook boards and generating functions of quasi-Yamanouchi tableaux. Our results also point towards attractive conjectures for the fundamental quasisymmetric expansion in each basis.

\section{Preliminaries}

A \textit{partition} $\mu = (\mu_1 \geq \mu_2 \geq \cdots \geq \mu_k > 0)$ is a finite sequence of non-increasing positive integers. The \textit{size} of $\mu$, denoted $|\mu|$, is the sum of the integers in the sequence. The \textit{length} of $\mu$ is the number of integers in the partition, denoted $\ell(\mu)$. We say that $\mu$ \textit{dominates} $\lambda$ if $\mu_1 + \cdots + \mu_i \geq \lambda_1 + \cdots + \lambda_i$ for all $i \geq 1$. If $\mu$ dominates $\lambda$, then we write $\mu \geq \lambda$, forming a partial order on partitions.
We use French notation to draw the \textit{diagram} corresponding to a partition $\mu$ by having left justified rows of boxes starting at the bottom, where the $i$th row has $\mu_i$ boxes. The conjugate of $\mu$ is obtained by taking the diagram of $\mu$ and reflecting across the diagonal.

Bases of the ring of symmetric functions are indexed by partitions, and in particular we write $e_\mu$, $h_\mu$, $m_\mu$, and $p_\mu$ to denote the elementary, complete homogeneous, monomial, and power-sum symmetric functions respectively and $s_\mu$ to denote the Schur functions. We also write $Q_\sigma(x)$ for the fundamental quasisymmetric function, where $\sigma \subseteq \{1,\ldots, n-1\}$ and 
$Q_\sigma(x) = \sum_{\substack{i_1 \leq \cdots \leq i_n \\ j\in \sigma \Rightarrow i_j < i_{j+1}}} x_{i_1}\cdots x_{i_n}$.

%\subsection{Jack Symmetric Functions}
%If $m_i = m_i(\mu)$ is the number of parts of $\mu$ of size $i$, then define 
%$z_{\mu} = \prod_i i^{m_i} m_i!$,
%and let $\langle\ ,\ \rangle_\alpha$ be the bilinear scalar product on symmetric functions with coefficients in $\mathbb{Q}(\alpha)$ defined by
%$\langle p_\mu, p_\lambda \rangle_\alpha = \delta_{\mu,\lambda}z_\mu \alpha^{\ell(\mu)}$.
%Jack functions are symmetric functions indexed by partitions $\mu$ with a parameter $\alpha$. They are the unique family of polynomials satisfying the following three conditions:

%a) (Orthogonality) $\langle J_\mu, J_\lambda \rangle_\alpha = 0$ whenever $\mu\neq \lambda$. 

%b) (Triangularity) If we write
%$\tilde{J}_\mu^{(\alpha)}(X) = \sum_{\lambda} d_{\mu,\lambda} m_\lambda$,
%then $d_{\mu,\lambda} = 0$ unless $\lambda \leq \mu$.

%c) (Normalization) For $|\mu| = n$, the coefficient $d_{\mu,1^n}$ of $m_{1^n}$ is $n!$. 

\subsection{Eulerian and Stirling numbers}

A permutation $\pi = \pi_1 \pi_2 \cdots \pi_n \in S_n$ has a descent at position $i$ if $\pi_i > \pi_{i+1}$. Write $A(n,k)$ for the Eulerian number counting permutations in $S_n$ with $k$ descents and $S(n,k)$ for the Stirling number of the second kind counting the ways to partition $n$ labelled objects into $k$ nonempty, unlabelled subsets. 

For $|\lambda| = n$, we define the set of $\lambda$-restricted permutations to be permutations where $1, 2,\ldots, \lambda_1$ appear in order, $\lambda_1+1, \ldots, \lambda_1 +\lambda_2$ appear in order, etc. We define the $\lambda$-restricted Eulerian number $A(\lambda,k)$ to be the the number of $\lambda$-restricted permutations in $S_n$ with $k$ descents.
\begin{example}
The $10$ $(3,2)$-restricted permutations are $12345$, $12435$, $14235$, $41235$, $12453$, $14253$, $41253$, $14523$, $41523$, and $45123$.
\end{example}

\subsection{Quasi-Yamanouchi Tableaux}
A \textit{semistandard Young tableau} $T$ is a filling of the diagram of a partition $\mu$ using positive integers that weakly increase to the right and strictly increase upwards. We say that $T$ has shape $\mu$ and write SSYT$_m(\mu)$ to denote the set of semistandard Young tableaux of shape $\mu$ with maximum value at most $m$.

If $w_i = w_i(T)$ is the number of entries of $T$ with value $i$, then we say that $T$ has \textit{weight} $w = (w_1, w_2, \ldots)$. The Kostka number $K_{\mu\lambda}$ counts the number of semistandard Young tableaux of shape $\mu$ and weight $\lambda$. 
Let $|\mu| = n$. A \textit{standard Young tableau} is a semistandard Young tableau with weight $(1^n)$. The set of standard Young tableaux of shape $\mu$ is SYT$(\mu)$.

We say that an entry $i$ is weakly left of an entry $j$ in a tableau $T$ when $i$ is above or above and to the left of $j$. The \textit{descent set} of a tableaux $T$ is the set of entries $i \in Des(T) \subseteq \{1,\ldots,n-1\}$ such that $i+1$ is weakly left of $i$, and we write $des(T) = |Des(T)|$. Given a descent set $Des(T) = \{d_1, d_2,\ldots,d_{k-1}\}$, the $i$th run of $T$ is the set of entries from $d_{i-1}+1$ to $d_{i}$, where $1 \leq i \leq k$, $d_0 = 0$, and $d_k = n$.

\begin{example} A standard Young tableau with five runs. The first and fourth runs are highlighted, and the tableau has descent set $\{3,6,8,11\}$.
\begin{displaymath}
	\tableau{
	\mathclap{\graybox}\mathclap{\raisebox{4\unitlength}{9}}&\mathclap{\graybox}\mathclap{\raisebox{4\unitlength}{10}}& 12\\
	4&5&7&\mathclap{\graybox}\mathclap{\raisebox{4\unitlength}{11}}\\
	\mathclap{\grayboxtwo}\mathclap{\raisebox{4\unitlength}{1}}&\mathclap{\grayboxtwo}\mathclap{\raisebox{4\unitlength}{2}}&\mathclap{\grayboxtwo}\mathclap{\raisebox{4\unitlength}{3}} & 6&8\\
	}
	\end{displaymath}
\end{example}

A semistandard Young tableau is \textit{quasi-Yamanouchi} if when $i$ appears in the tableau, the leftmost instance of $i$ is weakly left of some $i-1$. QYT$_{\leq m}(\mu)$ is the set of quasi-Yamanouchi tableaux with maximum value at most $m$, and QYT$_{=m}(\mu)$ is the set of quasi-Yamanouchi tableaux with maximum value exactly $m$. 

\begin{example} All the quasi-Yamanouchi tableaux of shape $(2,2,1)$, demonstrating that $\textnormal{QYT}_{=3}(2,2,1)=3$ and $\textnormal{QYT}_{=4}(2,2,1) = 2$.
\begin{displaymath}
	\tableau{
	3\\
	2&2\\
	1&1\\}
	\ \ \ \ \ \tableau{
	3\\
	2&3\\
	1&1\\}
	\ \ \ \ \ \tableau{
	3\\
	2&3\\
	1&2\\}
	\ \ \ \ \ \tableau{
	4\\
	2&3\\
	1&2\\}
	\ \ \ \ \ \tableau{
	3\\
	2&4\\
	1&3\\}
	\end{displaymath}
\end{example}

Quasi-Yamanouchi tableaux first arose as objects of interest in the work of Assaf and Searles \cite{AsSe17}, where they showed that quasi-Yamanouchi tableaux could be used to tighten Gessel's expansion of Schur polynomials into fundamental quasisymmetric polynomials. Some of their combinatorial properties and a partial enumeration can be found in \cite{Wa16}.

A property of particular interest is that quasi-Yamanouchi tableaux of shape $\mu$ have a natural bijection with standard Young tableaux of shape $\mu$ via the destandardization map that Assaf and Searles use \cite[Definition 2.5]{AsSe17}.
Given a semistandard Young tableau $T$, choose $i$ such that the leftmost $i$ is strictly right of the rightmost $i-1$, meaning to the right or below and to the right, or such that there are no $i-1$, and decrement every $i$ to $i-1$. Repeat until no more entries $i$ can be decremented. The resulting tableau is the \textit{destandardization} of $T$, and applying the inverse gives the standardization. We get the following result using this map, which appears in  \cite{AsSe17} and \cite{Wa16}.

\begin{proposition}
Let $\mu = (\mu_1, \mu_2, \ldots, \mu_k)$ be a partition of size $n$, then
$\textnormal{QYT}_{\leq n} (\mu) \cong \textnormal{SYT}(\mu).$
\end{proposition}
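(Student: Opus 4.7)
The plan is to exhibit the bijection explicitly using the destandardization map $\mathrm{dst}$ introduced immediately above, together with its inverse standardization map $\mathrm{std}$. Given a standard Young tableau $T \in \mathrm{SYT}(\mu)$, repeatedly apply destandardization: at each step choose some $i \geq 2$ whose leftmost occurrence is strictly right of the rightmost $i-1$ (or such that no $i-1$ appears) and decrement every $i$ to $i-1$. This process terminates because the sum of entries strictly decreases at each step, and the output $\mathrm{dst}(T)$ has shape $\mu$ with entries in $\{1, \ldots, n\}$, so certainly lies in $\mathrm{QYT}_{\leq n}(\mu)$ provided it is in fact quasi-Yamanouchi. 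But by construction, when $\mathrm{dst}$ halts, no further decrement is legal, which is precisely the quasi-Yamanouchi condition: for every value $i \geq 2$ appearing, the leftmost $i$ is weakly left of some $i-1$.

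For the reverse direction, given $T' \in \mathrm{QYT}_{\leq n}(\mu)$, one standardizes by relabeling the $n$ cells with the values $1, 2, \ldots, n$ according to a fixed tie-breaking rule on equal entries (for instance, among cells of equal value, read from leftmost column to rightmost, and within a column from top to bottom), yielding a standard Young tableau of shape $\mu$. A routine induction on the number of distinct values, exploiting the fact that each application of $\mathrm{dst}$ merges exactly one ``block'' of consecutive standardized labels, shows that $\mathrm{std}\circ\mathrm{dst}$ and $\mathrm{dst}\circ\mathrm{std}$ are the identity on $\mathrm{SYT}(\mu)$ and $\mathrm{QYT}_{\leq n}(\mu)$ respectively.

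The main technical step, and the principal obstacle, is verifying that each destandardization step preserves the SSYT property, so that intermediate tableaux are well-defined. Concretely, after replacing every $i$ by $i-1$, one must check that columns remain strictly increasing and rows weakly increasing. The ``strictly right'' hypothesis guarantees that no cell containing $i$ sits directly above a cell containing $i-1$ (which would otherwise produce equal entries in a column after decrementing), while the original SSYT inequalities rule out any conflict with entries different from $i-1$. Once this invariant is established, the termination and well-definedness arguments above combine with the inverse construction to yield the stated bijection.
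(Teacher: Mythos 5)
Your argument is correct and follows essentially the same route as the paper, which simply invokes the Assaf--Searles destandardization map and its inverse standardization to identify $\mathrm{QYT}_{\leq n}(\mu)$ with $\mathrm{SYT}(\mu)$. The one detail you elide that deserves a word is that the outcome of repeatedly decrementing is independent of the order in which eligible values $i$ are chosen (confluence), which is needed for destandardization to be a well-defined function; this is standard and is likewise taken for granted in the paper.
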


Note that by definition, QYT$_{=m}(\mu)=0$ for any $|\mu| = n$ and $m>n$, so QYT$_{\leq n}(\mu)$ contains all quasi-Yamanouchi tableaux of shape $\mu$. Since we can partition QYT$_{\leq n}(\mu)$ into $\{\textrm{QYT}_{=m}(\mu)\  |\ 1\leq m \leq n\}$, this bijection gives a refinement on standard Young tableaux. In particular, the image of QYT$_{=m}(\mu)$ is the subset of SYT$(\mu)$ with exactly $m$ runs.
We will also use the following result, which is obtained through a bijection consisting of standardizing a quasi-Yamanouchi tableau, conjugating, and then destandardizing.
\begin{lemma}\label{QYTsymmetry}
Given a partition $\lambda$ of $n$, its conjugate $\lambda'$, and $1\leq k \leq n$, $\textnormal{QYT}_{=k}(\lambda) = \textnormal{QYT}_{=(n+1)-k}(\lambda')$.
\end{lemma}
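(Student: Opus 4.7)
The plan is to follow the hint in the text and build the bijection in three stages: standardize, conjugate, destandardize. The work reduces to tracking how the run/descent statistic transforms at each stage.

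First, I would invoke the destandardization map of Assaf--Searles (and the refinement already noted in the paper that the image of $\textnormal{QYT}_{=m}(\mu)$ is the set of $\textnormal{SYT}(\mu)$ with exactly $m$ runs). Thus standardization gives a bijection
\[
\textnormal{QYT}_{=k}(\lambda) \;\longleftrightarrow\; \{T \in \textnormal{SYT}(\lambda) : T \text{ has exactly } k \text{ runs}\},
\]
i.e.\ tableaux with $k-1$ descents.

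Next, I would establish the key combinatorial fact about conjugation of standard Young tableaux: for $T \in \textnormal{SYT}(\lambda)$ with conjugate $T' \in \textnormal{SYT}(\lambda')$,
\[
\textnormal{Des}(T') = \{1,2,\ldots,n-1\} \setminus \textnormal{Des}(T).
\]
The argument is just a direct case check using the descent convention of the paper. In an SYT, $i$ and $i+1$ cannot lie in the same row with $i+1$ strictly left of $i$ (row entries increase), nor in the same column with $i+1$ strictly below $i$ (columns strictly increase upward), so $i \in \textnormal{Des}(T)$ if and only if $i+1$ lies in a strictly higher row than $i$; reflecting across the diagonal swaps ``strictly higher row'' with ``strictly further right column,'' which is exactly the negation of being weakly left in the conjugate. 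Hence if $T$ has $k$ runs (so $k-1$ descents), then $T'$ has $(n-1)-(k-1) = n-k$ descents, i.e.\ $n+1-k$ runs.

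Finally, I would destandardize $T'$ to land in $\textnormal{QYT}_{=(n+1-k)}(\lambda')$. Composing the three bijections standardize $\circ$ conjugate $\circ$ destandardize (and noting each step is invertible) yields the desired equality of cardinalities. The only step that requires any real argument is the descent-complement lemma for SYT under conjugation; everything else is either cited from the preceding paragraph or is immediate, so I expect that to be the main (and only modest) obstacle.
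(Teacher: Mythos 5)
Your proposal is correct and is essentially the paper's own argument: the paper proves Lemma \ref{QYTsymmetry} by exactly the composite standardize--conjugate--destandardize bijection, using the run refinement of $\textnormal{SYT}$ and the fact that conjugation complements descent sets. Your filled-in verification of the descent-complement step (via the dichotomy that in a standard Young tableau $i+1$ is either strictly above and weakly left of $i$, or weakly below and strictly right of it) is the right justification.
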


\subsection{Dual Equivalence}
%citation appropriate?
We will need Assaf's dual equivalence graphs \cite{As15a}, although not in their full generality.
Define the \textit{elementary dual equivalence involution} $d_i$ on $\pi \in S_n$ for $1<i<n$ by $d_i(\pi) = \pi$ if $i-1$, $i$, and $i+1$ appear in order in $\pi$ and by $d_i(\pi) = \pi'$ where $\pi'$ is $\pi$ with the positions of $i$ and whichever of $i\pm1$ is further from $i$ interchanged when they do not appear in order. Two permutations $\pi$ and $\tau$ are \textit{dual equivalent} when $d_{i_1}\cdots d_{i_k}(\pi) = \tau$ for some $i_1,\ldots,i_k$. The reading word of a tableau is obtained by reading the entries from left to right, top to bottom, which for standard Young tableaux produces a permutation, and two standard Young tableaux of the same shape are dual equivalent if their reading words are.

We also use Assaf's characterization of Gessel's expansion of the Schur function into the fundamental quasisymmetric basis
$s_\mu = \sum_{T\in [T_\mu]} Q_{Des(T)}(x)$
where $[T_\mu]$ is the dual equivalence class of all standard Young tableaux of shape $\mu$, which is in fact all standard Young tableaux of shape $\mu$.

\subsection{Rook Boards}

%version in Schur.pdf
%Let $0\leq c_1\leq c_2\leq \cdots \leq c_n \leq n$, and let $B = B(c_1,\ldots, c_n)$ be the Ferrers board whose $i$th column has height $c_i$. Then [ref]
%$$\prod_{i=1}^n (\alpha + c_i -i + 1) = \sum_{k=0}^n {\alpha +k\choose n} h_k(B) = \sum_{k=0}^n {\alpha \choose k} k!r_{n-k}(B)$$

%where $h_k(B)$ is the number of ways of placing $n$ nonattacking rooks on the $n\times n$ grid containing $B$ with exactly $n-k$ rooks on $B$ and $r_k(B)$ is the number of ways of placing $k$ nonattacking rooks on $B$. 

We will use the following result from Goldman, Joichi, and White \cite{GJW75} which translates certain products of factors into each of our bases.

\begin{proposition}\label{rookboardprop}
Let $0\leq c_1\leq c_2\leq \cdots \leq c_n \leq n$ with $c_i \in \mathbb{N}$, and let $B = B(c_1,\ldots, c_n)$ be the Ferrers board whose $i$th column has height $c_i$. Then
$$\prod_{i=1}^n (\alpha + c_i -i + 1) = \sum_{k=0}^n h_k(B){\alpha +k\choose n} = \sum_{k=0}^n r_{n-k}(B) {\alpha \choose k} k!,$$
where $h_k(B)$ is the number of ways of placing $n$ nonattacking rooks on the $n\times n$ grid containing $B$ with exactly $k$ rooks on $B$ and $r_k(B)$ is the number of ways of placing $k$ nonattacking rooks on $B$.
\end{proposition}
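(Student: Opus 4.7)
The plan is to prove both equalities simultaneously by viewing them as polynomial identities in $\alpha$ and verifying them at every nonnegative integer $\alpha = m$, since two polynomials of degree at most $n$ that agree at infinitely many values coincide. This is the classical Goldman--Joichi--White argument, which I would reconstruct via a double count of rook placements.

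The central construction is to augment $B$ by attaching an $m \times n$ rectangle $M$ directly below it, and to count non-attacking placements of $n$ rooks on $B \cup M$ in two ways. Going column by column from left to right, the hypothesis $c_1 \le c_2 \le \cdots \le c_n$ guarantees that every rook placed in an earlier column occupies a row still contained in the current column's range; column $i$ therefore contributes exactly $(c_i + m) - (i-1) = \alpha + c_i - i + 1$ choices, and multiplying over $i$ reproduces the left-hand side. Alternatively, I would stratify placements by the number $k$ of rooks that land in $B$: these contribute $r_k(B)$ configurations and occupy $k$ of the columns, leaving the remaining $n - k$ rooks to fill the complementary columns using distinct rows of $M$, with $m(m-1)\cdots(m-(n-k)+1)$ options. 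Re-indexing $j = n - k$ yields the expansion $\sum_j r_{n-j}(B)\binom{\alpha}{j} j!$, giving the rightmost equality.

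To obtain the $h_k(B)\binom{\alpha + k}{n}$ expansion from this, I would apply Vandermonde in the form $\binom{\alpha + k}{n} = \sum_j \binom{\alpha}{j}\binom{k}{n-j}$, which after matching coefficients of $\binom{\alpha}{j}$ reduces the required identity to the combinatorial equality $\sum_k h_k(B)\binom{k}{\ell} = r_\ell(B)(n - \ell)!$. This is itself a direct double count: both sides enumerate pairs consisting of a non-attacking $\ell$-rook placement on $B$ together with an extension to a full non-attacking $n$-rook placement on the $n \times n$ grid, since a full placement hitting $B$ in $k$ spots offers $\binom{k}{\ell}$ ways to distinguish such a sub-placement, while the extension of any fixed $\ell$-rook configuration to a full placement contributes $(n-\ell)!$ choices.

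No single step here is deep; the main obstacle will just be careful bookkeeping of indices when juggling the roles of $k$, $j$, and $n-k$ in the two re-indexings. I would also be explicit about the orientation of $M$ relative to $B$ so that the column-by-column count cleanly uses the monotonicity of the $c_i$, and I would sanity-check the boundary cases $k = 0$ and $k = n$ before writing out the main argument.
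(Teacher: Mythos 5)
The paper does not prove this proposition at all; it is quoted as a known result of Goldman, Joichi, and White \cite{GJW75}. Your reconstruction is correct and is essentially the original argument: the column-by-column count of $n$ non-attacking rooks on the board augmented by an $m\times n$ rectangle (using the monotonicity of the $c_i$) gives the factorization $\prod_i(m+c_i-i+1)=\sum_k r_{n-k}(B)\binom{m}{k}k!$ for all integers $m\ge 0$ and hence as polynomials in $\alpha$, and your Vandermonde step together with the double count $\sum_k h_k(B)\binom{k}{\ell}=r_\ell(B)(n-\ell)!$ correctly converts this to the hit-number expansion in the basis $\binom{\alpha+k}{n}$.
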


%version that I think it should be

\section{Partial Results} 
\subsection{Eulerian and Stirling numbers}
%how accurate is this narration? also how obvious is it that the prop is true?
We first became interested in Conjecture 1 and Conjecture 2 due to the following observation, which follows from the normalization property of Jacks and the way that $\alpha^n$ is written in these two bases.
\begin{proposition} \label{eulerianstirlingprop}
For a partition $\mu$, the coefficient of $m_{1^n}$ in $\tilde{J}_\mu^{(\alpha)}(X)$ is
$$\langle \tilde{J}_\mu^{(\alpha)}(X), h_{1^n}\rangle = n!\alpha^n = \sum_{k=0}^{n-1} n! A(n,k) {\alpha + k \choose n} = \sum_{k=1}^{n} n! S(n,k){\alpha \choose k} k!.$$
\end{proposition}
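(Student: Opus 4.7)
The plan is to establish the three claimed equalities in turn, each of which reduces to a classical fact and requires no further combinatorics.

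First, I would prove the identity $\langle \tilde{J}_\mu^{(\alpha)}(X), h_{1^n}\rangle = n!\alpha^n$. Since the complete homogeneous and monomial symmetric functions are dual bases under the Hall inner product, the pairing on the left equals the coefficient of $m_{1^n}$ in $\tilde{J}_\mu^{(\alpha)}(X)$. By the standard normalization of the integral form Jack polynomials (see \cite[Chapter VI]{Macdonald}), the coefficient of $m_{1^n}$ in $J_\mu^{(\alpha)}$ equals $n!$ for every partition $\mu$ of $n$. Specializing at $1/\alpha$ and applying the defining relation $\tilde{J}_\mu^{(\alpha)}(X) = \alpha^n J_\mu^{(1/\alpha)}(X)$ multiplies this coefficient by $\alpha^n$, giving $n!\alpha^n$ as required.

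Next, I would verify $\alpha^n = \sum_{k=0}^{n-1} A(n,k)\binom{\alpha+k}{n}$, which is precisely the classical Worpitzky identity; multiplying through by $n!$ yields the middle equality. The final equality reduces similarly to the defining expansion of monomials in falling factorials, $\alpha^n = \sum_{k=1}^n S(n,k)(\alpha)_k$, where $(\alpha)_k = \alpha(\alpha-1)\cdots(\alpha-k+1) = k!\binom{\alpha}{k}$; again multiplying through by $n!$ delivers the claim. Both identities are standard and their proofs by induction or by a direct bijection are well documented, so I would merely cite them rather than reprove them.

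There is no substantial obstacle: the content of the proposition lies less in the proofs than in the recognition that these particular bases are the natural ones in which to expand $\langle \tilde{J}_\mu^{(\alpha)}(X), s_\lambda \rangle$. The proposition itself is the $\lambda = (1^n)$ case of Conjectures~\ref{C1} and~\ref{C2}, and it forces the coefficients $a_k((1^n),(1^n))$ and $b_{n-k}((1^n),(1^n))$ (up to normalization by Kostka numbers linking $h_{1^n}$ to Schur functions) to involve $A(n,k)$ and $S(n,k)$ respectively, which is exactly the computational phenomenon that motivated the two conjectured positive expansions.
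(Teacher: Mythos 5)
Your proposal is correct and matches the paper's (very brief) justification exactly: the paper derives this from the normalization $[m_{1^n}]J_\mu^{(\alpha)} = n!$ together with the classical Worpitzky and Stirling-number expansions of $\alpha^n$, which is precisely your argument. No issues.
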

Combined with computer data confirming the two conjectures up to $n=11$, we have the first hint that these bases may have interesting combinatorial interpretations. Two immediate corollaries come from extracting the coefficient of $m_{1^n}$ from $s_\lambda$ in the Schur expansion of $\tilde{J}_\mu^{(\alpha)}(X)$.
\begin{corollary} \label{eulerianstirlingcor1}
Given a partition $\mu$, we have
$$\sum_{|\lambda| = n} \sum_{k=0}^{n-1} a_{k}(\mu,\lambda) K_{\lambda(1^n)} {\alpha + k \choose n} = \sum_{k=0}^{n-1} n!A(n,k){\alpha + k \choose n}.$$
\end{corollary}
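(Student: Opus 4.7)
The strategy is a direct coefficient extraction, making the corollary essentially a restatement of Proposition \ref{eulerianstirlingprop} after expanding in the Schur basis.

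First, I would use the hypothesis of Conjecture \ref{C1}'s display to write the Schur expansion
$$\tilde{J}_\mu^{(\alpha)}(X) = \sum_{|\lambda|=n} \langle \tilde{J}_\mu^{(\alpha)}(X), s_\lambda\rangle\, s_\lambda = \sum_{|\lambda|=n} \sum_{k=0}^{n-1} a_k(\mu,\lambda) \binom{\alpha+k}{n} s_\lambda.$$
Next, I would pair both sides with $h_{1^n}$ under the Hall inner product. Since $\{h_\mu\}$ and $\{m_\mu\}$ are dual bases and $h_{1^n} = \sum_\lambda K_{\lambda,(1^n)} s_\lambda$, we have $\langle s_\lambda, h_{1^n}\rangle = K_{\lambda,(1^n)}$, which converts the left-hand side into
$$\langle \tilde{J}_\mu^{(\alpha)}(X), h_{1^n}\rangle = \sum_{|\lambda|=n}\sum_{k=0}^{n-1} a_k(\mu,\lambda)\, K_{\lambda,(1^n)} \binom{\alpha+k}{n}.$$

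Finally, I would substitute the evaluation from Proposition \ref{eulerianstirlingprop} for the same inner product, namely $\langle \tilde{J}_\mu^{(\alpha)}(X), h_{1^n}\rangle = \sum_{k=0}^{n-1} n!\, A(n,k)\binom{\alpha+k}{n}$, and equate the two expressions to obtain the identity. There is no genuine obstacle here: the only point deserving care is the identification of the $m_{1^n}$-coefficient with the $h_{1^n}$-inner product via duality, and everything else is a bookkeeping step using the stated expansion and Proposition \ref{eulerianstirlingprop}. (Note that the resulting equation is an equality of polynomials in $\binom{\alpha+k}{n}$, not a coefficient-wise identity, since the basis $\{\binom{\alpha+k}{n}\}_{k=0}^{n-1}$ of $\mathbb{Q}[\alpha]_{\leq n}$ is linearly independent, so one does in fact recover $\sum_\lambda a_k(\mu,\lambda) K_{\lambda,(1^n)} = n! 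A(n,k)$ for each $k$.)
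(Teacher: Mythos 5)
Your proof is correct and follows exactly the route the paper indicates: the paper derives this corollary by ``extracting the coefficient of $m_{1^n}$ from $s_\lambda$ in the Schur expansion,'' which is precisely your pairing with $h_{1^n}$ via $\langle s_\lambda, h_{1^n}\rangle = K_{\lambda,(1^n)}$ followed by an appeal to Proposition \ref{eulerianstirlingprop}. Your closing remark on the linear independence of $\{\binom{\alpha+k}{n}\}_{k=0}^{n-1}$ is a correct and worthwhile extra observation, since it justifies the coefficient-wise refinement the paper alludes to afterward.
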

\begin{corollary}
Given a partition $\mu$, we have
$$\sum_{|\lambda| = n} \sum_{k=1}^{n-1} b_{n-k}(\mu,\lambda) K_{\lambda(1^n)} {\alpha \choose k} k! = \sum_{k=1}^{n-1} n!S(n,k){\alpha \choose k} k!.$$
\end{corollary}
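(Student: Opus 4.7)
The plan is to derive this corollary directly from Proposition \ref{eulerianstirlingprop} by expanding $\tilde{J}_\mu^{(\alpha)}(X)$ in the Schur basis and then extracting the coefficient of $m_{1^n}$ in two different ways. First I would use that $\langle s_\lambda, h_{1^n}\rangle = K_{\lambda(1^n)}$, which follows from the standard identity $\langle s_\lambda, h_\mu\rangle = K_{\lambda\mu}$ (equivalently, the coefficient of $m_{1^n}$ in $s_\lambda$ is the number of semistandard tableaux of shape $\lambda$ and weight $(1^n)$).

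Next, using bilinearity of the Hall scalar product together with the Schur expansion of $\tilde{J}_\mu^{(\alpha)}(X)$, I would write
\begin{align*}
\langle \tilde{J}_\mu^{(\alpha)}(X), h_{1^n}\rangle
= \sum_{|\lambda|=n} \langle \tilde{J}_\mu^{(\alpha)}(X), s_\lambda\rangle\, K_{\lambda(1^n)}.
\end{align*}
Substituting the conjectured basis expansion from Conjecture \ref{C2} gives
\begin{align*}
\langle \tilde{J}_\mu^{(\alpha)}(X), h_{1^n}\rangle
= \sum_{|\lambda|=n} \sum_{k=1}^{n} b_{n-k}(\mu,\lambda)\, K_{\lambda(1^n)} {\alpha \choose k} k!.
\end{align*}
On the other hand, Proposition \ref{eulerianstirlingprop} evaluates this same scalar product to be $\sum_{k=1}^{n} n!\, S(n,k) {\alpha \choose k} k!$. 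Equating the two expressions yields the claimed identity.

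There is no real obstacle here: the argument is a one-line manipulation once one knows the ingredient $\langle s_\lambda, h_\mu\rangle = K_{\lambda\mu}$ and Proposition \ref{eulerianstirlingprop}. The only subtle point is that the expansion coefficients $b_{n-k}(\mu,\lambda)$ are \emph{defined} by the formula in Conjecture \ref{C2}, so this corollary holds unconditionally (regardless of whether $b_{n-k}(\mu,\lambda)\in\mathbb{N}$), provided one interprets the $b$'s as the unique coefficients in the basis $\{\binom{\alpha}{k}k!\}$. The parallel argument using $\{\binom{\alpha+k}{n}\}$ and Eulerian numbers proves Corollary \ref{eulerianstirlingcor1} in exactly the same way.
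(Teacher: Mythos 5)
Your proof is correct and matches the paper's (implicit) argument exactly: extract the coefficient of $m_{1^n}$ via $\langle s_\lambda, h_{1^n}\rangle = K_{\lambda(1^n)}$, apply bilinearity, and compare with Proposition \ref{eulerianstirlingprop}; your remark that the identity holds unconditionally once the $b_{n-k}(\mu,\lambda)$ are taken as the (unique) expansion coefficients is also the right reading. Note that your derivation naturally yields upper summation limit $n$ rather than the $n-1$ printed in the corollary, which appears to be a typo in the statement.
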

Furthermore, if $a_k(\mu,\lambda) \in \mathbb{N}[\alpha]$ or $b_k(\mu,\lambda) \in \mathbb{N}[\alpha]$ in general, then the respective result above would indicate some refinement on Eulerian numbers or Stirling numbers of the second kind.

\subsection{Quasi-Yamanouchi Tableaux}

In the case of $\mu = (n)$ and $|\lambda| = n$, we noticed that the equality
$$\sum_{k=0}^{n-1} a_k((n),\lambda) = |\textrm{SYT}(\lambda)|$$
held for the computer generated data. Upon closer inspection, it appeared that in fact the following theorem was true.

\begin{theorem}\label{QYTtheorem}
Let $\lambda$ be a partition of $n$ and $\lambda'$ be its conjugate. Then for the coefficient of $s_\lambda$ in $\tilde{J}_{(n)}^{(\alpha)}(X)$
$$\langle \tilde{J}_{(n)}^{(\alpha)}(X), s_\lambda\rangle = \sum_{k=0}^{n-1} a_k((n),\lambda) {\alpha + k \choose n},$$
we have $a_k((n),\lambda) =n!  \textnormal{QYT}_{=k+1}(\lambda')$.
\end{theorem}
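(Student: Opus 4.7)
The plan is to show that the Schur coefficient of $\tilde{J}_{(n)}^{(\alpha)}(X)$ at $s_\lambda$ is $n!\, s_\lambda(1^\alpha)$, the polynomial-in-$\alpha$ extension of the principal specialization of $s_\lambda$, and then to expand this polynomial in the binomial basis $\{\binom{\alpha+k}{n}\}_{k=0}^{n-1}$ using Gessel's fundamental quasisymmetric expansion of the Schur functions.

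First, I would establish the generating-function identity
\begin{equation*}
\sum_{n \geq 0} \frac{\tilde{J}_{(n)}^{(\alpha)}(X)}{n!}\, z^n \;=\; \prod_i (1 - z x_i)^{-\alpha},
\end{equation*}
which is a direct rewriting of the classical generating function for one-row Jack polynomials in \cite[Chapter 6]{Macdonald} after substituting $\alpha \mapsto 1/\alpha$ and multiplying through by $\alpha^n$. When $\alpha = m$ is a positive integer, the right-hand side equals $H(z)^m$, whose $n$th coefficient $\sum_{k_1+\cdots+k_m=n} h_{k_1}\cdots h_{k_m}$ expands in Schurs as $\sum_{\lambda \vdash n} s_\lambda(1^m)\, s_\lambda$ by grouping weak compositions of $n$ into $m$ parts by their content partition and recognizing $s_\lambda(1^m) = |\textnormal{SSYT}_m(\lambda)|$. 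Hence $\langle \tilde{J}_{(n)}^{(m)}, s_\lambda \rangle = n!\, s_\lambda(1^m)$ for every positive integer $m$, and since both sides are polynomials in $\alpha$ of degree at most $n$ they agree identically: $\langle \tilde{J}_{(n)}^{(\alpha)}, s_\lambda \rangle = n!\, s_\lambda(1^\alpha)$.

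Next, applying Gessel's expansion $s_\lambda = \sum_{T \in \textnormal{SYT}(\lambda)} Q_{Des(T)}$ together with the elementary count $Q_\sigma(1^m) = \binom{m + n - |\sigma| - 1}{n}$ (via the substitution $j_r = i_r - |\sigma \cap \{1,\ldots,r-1\}|$, which converts a $\sigma$-compatible weakly increasing sequence in $[m]$ into an unrestricted weakly increasing sequence in $[m - |\sigma|]$), extended to all $\alpha$ by polynomial identity and re-indexed by $k = n - des(T) - 1$, gives
\begin{equation*}
s_\lambda(1^\alpha) \;=\; \sum_{k=0}^{n-1} |\{T \in \textnormal{SYT}(\lambda) : des(T) = n - k - 1\}|\, \binom{\alpha + k}{n}.
\end{equation*}

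Finally, the destandardization bijection identifies tableaux in $\textnormal{SYT}(\lambda)$ having $r$ runs (equivalently $r-1$ descents) with quasi-Yamanouchi tableaux of shape $\lambda$ and maximum entry exactly $r$, giving $|\{T \in \textnormal{SYT}(\lambda) : des(T) = n-k-1\}| = \textnormal{QYT}_{=n-k}(\lambda)$; one application of Lemma \ref{QYTsymmetry} converts this to $\textnormal{QYT}_{=k+1}(\lambda')$, yielding $a_k((n),\lambda) = n!\, \textnormal{QYT}_{=k+1}(\lambda')$. The main obstacle is step one---identifying the Schur coefficient of $\tilde{J}_{(n)}^{(\alpha)}$ with the principal specialization $n!\, s_\lambda(1^\alpha)$; once that is in hand, the remainder is a routine chain of the standard bijections already recorded in the preliminaries.
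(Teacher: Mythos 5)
Your proof is correct, but it takes a genuinely different route from the paper's. The paper works coefficient-by-coefficient in the monomial basis: it takes Macdonald's formula $\frac{n!}{\lambda!}\prod_{s\in\lambda}(\alpha+arm(s))$ for $\langle\tilde{J}_{(n)}^{(\alpha)},h_\lambda\rangle$ modulo lower terms, converts it bijectively into $\lambda$-restricted Eulerian numbers $A(\lambda,k)$, relates those to $\sum_\nu K_{\nu\lambda}\,\textnormal{QYT}_{=k+1}(\nu)$ via RSK, and then passes from the monomial to the Schur expansion by induction on dominance order, finishing with Lemma \ref{QYTsymmetry}. You instead identify $\tilde{J}_{(n)}^{(\alpha)}$ globally as $n!$ times the degree-$n$ part of $\prod_i(1-zx_i)^{-\alpha}$ (which is consistent with the paper's own monomial formula, since $n!\prod_i\binom{\alpha+\lambda_i-1}{\lambda_i}=\frac{n!}{\lambda!}\prod_s(\alpha+arm(s))$), read off the Schur coefficient $n!\,s_\lambda(1^\alpha)$ from the specialized Cauchy identity plus polynomiality in $\alpha$, and then convert $s_\lambda(1^\alpha)$ into the binomial basis via Gessel's expansion and the count $Q_\sigma(1^m)=\binom{m+n-|\sigma|-1}{n}$, ending with the same destandardization and conjugation steps. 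Your route bypasses the restricted Eulerian numbers, RSK, and the dominance-order induction entirely, which makes it shorter and arguably more transparent; what it gives up is exactly the intermediate combinatorics (the $\lambda$-restricted permutations and the RSK refinement) that the paper wants on record for its broader program. All the individual steps check out: the $k=n-1-des(T)$ reindexing, the identification of $r$-run standard tableaux with $\textnormal{QYT}_{=r}(\lambda)$, and the final application of Lemma \ref{QYTsymmetry} giving $\textnormal{QYT}_{=n-k}(\lambda)=\textnormal{QYT}_{=k+1}(\lambda')$ are all correct.
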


We split the proof into several parts, starting with the coefficient of $m_\lambda$ in $J_{(n)}^{(\alpha)}(X)$. By example 3 in chapter VI, section 10 of Macdonald \cite{Macdonald}, this is $\frac{n!}{\lambda!}\prod_{s\in \lambda} (arm(s)\alpha +1)$ where $\lambda! = \lambda_1!\lambda_2!\cdots$ and the arm of a cell $s$ in the diagram of $\lambda$ is the number of cells to the right of $s$. Converting to $\tilde{J}_{(n)}^{(\alpha)}(X)$, this becomes $\frac{n!}{\lambda!}\prod_{s\in \lambda} (\alpha + arm(s))$. The next step is to convert these coefficients to the new basis.

\begin{lemma}
Given a partition $\lambda$ of $n$,
$$\frac{n!}{\lambda!}\prod_{s\in \lambda} (\alpha + arm(s)) = n! \sum_{k=0}^{n-1} A(\lambda, k){\alpha + n - 1 - k \choose n},$$
where $A(\lambda,k)$ is the number of $\lambda$-restricted permutations with $k$ descents. 
\end{lemma}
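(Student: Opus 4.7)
The plan is to first simplify the left-hand side arithmetically, then reduce the identity to a polynomial identity in $\alpha$ that can be verified combinatorially via $P$-partition theory.

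First I would observe that in row $i$ of the diagram of $\lambda$, the arm lengths of the cells are exactly $0, 1, \ldots, \lambda_i - 1$, so
\[
\prod_{s \in \lambda}(\alpha + \mathrm{arm}(s)) \;=\; \prod_{i=1}^{\ell(\lambda)} \alpha(\alpha+1)\cdots(\alpha+\lambda_i-1) \;=\; \prod_i \lambda_i!\binom{\alpha+\lambda_i-1}{\lambda_i}.
\]
Dividing by $\lambda! = \prod_i \lambda_i!$ and cancelling the common factor of $n!$, the claim reduces to the polynomial identity
\[
\prod_i \binom{\alpha+\lambda_i-1}{\lambda_i} \;=\; \sum_{k=0}^{n-1} A(\lambda,k)\binom{\alpha+n-1-k}{n}.
\]
Since each side is a polynomial of degree $n$ in $\alpha$, it suffices to prove it for all positive integers $\alpha$.

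Next I would interpret the left-hand side combinatorially: for a positive integer $\alpha$, $\binom{\alpha+\lambda_i-1}{\lambda_i}$ counts weakly increasing sequences of length $\lambda_i$ with entries in $[\alpha]$, so the product counts $\ell$-tuples of such sequences. Equivalently, letting $P$ be the naturally labeled poset on $[n]$ consisting of a disjoint union of chains $\,1 < 2 < \cdots < \lambda_1,\ \lambda_1+1<\cdots<\lambda_1+\lambda_2,\ \ldots$, these tuples are precisely the $(P,\alpha)$-partitions, i.e.\ order-preserving maps $\sigma : P \to [\alpha]$.

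Then I would invoke Stanley's fundamental theorem of $P$-partitions to partition these $(P,\alpha)$-partitions according to the linear extension $\pi$ of $P$ that a given $\sigma$ induces. Because $P$ is naturally labeled, the $(P,\alpha)$-partitions linearizing to $\pi$ are in bijection with weakly increasing sequences $1 \le j_1 \le \cdots \le j_n \le \alpha$ that are strict at the positions in $\mathrm{Des}(\pi)$; a straightforward shift (subtracting the number of prior descents) shows there are exactly $\binom{\alpha+n-1-\mathrm{des}(\pi)}{n}$ such sequences. Since linear extensions of $P$ are exactly the $\lambda$-restricted permutations of $[n]$, summing over $\pi$ and grouping by descent number yields the right-hand side.

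The main technical care is choosing the $P$-partition convention (weak on comparable pairs with increasing labels, strict on comparable pairs with decreasing labels) consistently with the natural labeling, so that $(P,\alpha)$-partitions literally coincide with the tuples of weakly increasing sequences counted by the left-hand side; once that is pinned down, the descent-strict evaluation of $Q_{\mathrm{Des}(\pi)}$ at $(1^\alpha)$ is routine. I do not anticipate any genuinely hard step beyond this bookkeeping.
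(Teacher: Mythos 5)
Your proof is correct. The opening reduction is exactly the paper's: cancel $n!$, note the arms in row $i$ are $0,1,\dots,\lambda_i-1$, and rewrite the left side as $\prod_i \binom{\alpha+\lambda_i-1}{\lambda_i}$, reducing everything to the polynomial identity $\prod_i \binom{\alpha+\lambda_i-1}{\lambda_i} = \sum_k A(\lambda,k)\binom{\alpha+n-1-k}{n}$. Where the paper's sketch then appeals to an unspecified ``bijective algorithm,'' you instead recognize the left side as the order polynomial of a naturally labeled disjoint union of chains and invoke Stanley's fundamental theorem of $P$-partitions; the linear extensions of that poset are precisely the $\lambda$-restricted permutations, and the count $\binom{\alpha+n-1-\mathrm{des}(\pi)}{n}$ per extension is the standard descent-shift computation (your $\lambda=(1^n)$ specialization recovers Worpitzky's identity, consistent with Proposition 3.1 of the paper). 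This is a legitimately complete argument --- the $P$-partition decomposition is itself the bijection the authors presumably have in mind --- and it has the advantage of resting on a citable structural theorem rather than an ad hoc algorithm; the paper's explicit bijective route, whatever its details, would have the advantage of being self-contained. The only point to be careful about, which you already flag, is fixing the $(P,\omega)$-partition convention so that ``order-preserving maps $P\to[\alpha]$'' for the natural labeling really decompose with weak inequalities off the descent set and strict inequalities on it; with Stanley's standard conventions this works out as you describe.
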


\begin{proofsketch}
Cancel $n!$ and rewrite the left hand side to get 
$$\prod_{i=1}^{\ell(\lambda)} {\alpha + \lambda_i -1\choose \lambda_i} =\sum_{k=0}^{n-1} A(\lambda, k){\alpha + n - 1 - k \choose n},$$ 
then show these are equal through a bijective algorithm.
\end{proofsketch}

We now wish to relate these $\lambda$-restricted Eulerian numbers to quasi-Yamanouchi tableaux. We can achieve this through the Robinson-Schensted-Knuth (RSK) correspondence. 
\begin{lemma} \label{RSKlemma}
Given a partition $\lambda$ of $n$, it holds that
$$\sum_{k=0}^{n-1} A(\lambda,k){\alpha +n -1-k \choose n} = \sum_{\substack{|\nu| = n \\ \nu \geq \lambda}} K_{\nu\lambda} \sum_{k=0}^{n-1} \textnormal{QYT}_{=k+1}(\nu){\alpha + n -1-k \choose n}.$$
\end{lemma}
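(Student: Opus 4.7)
The plan is to prove the stronger coefficient-wise identity
$$A(\lambda,k) = \sum_{\substack{|\nu|=n\\\nu\geq\lambda}} K_{\nu\lambda}\, \textnormal{QYT}_{=k+1}(\nu) \qquad (0\leq k\leq n-1),$$
which immediately yields the lemma upon multiplication by $\binom{\alpha+n-1-k}{n}$ and summation over $k$. The strategy is a three-step chain of descent-preserving bijections connecting $\lambda$-restricted permutations to quasi-Yamanouchi tableaux.

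First, I would identify $\lambda$-restricted permutations in $S_n$ with words of weight $\lambda$ in the alphabet $\{1,2,\ldots,\ell(\lambda)\}$ by replacing each of $1,\ldots,\lambda_1$ with $1$, each of $\lambda_1+1,\ldots,\lambda_1+\lambda_2$ with $2$, and so on. The $\lambda$-restricted condition means that within any block of equal letters in the resulting word $w$ the underlying values of $\pi$ occur in increasing order, so a position $i$ is a descent of $\pi$ exactly when $w_i>w_{i+1}$. Hence $\lambda$-restricted permutations in $S_n$ with $k$ descents are in bijection with words of weight $\lambda$ having $k$ descents.

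Second, I would apply RSK to these words, producing a bijection between words of weight $\lambda$ and size $n$ and pairs $(P,Q)$ where $P\in\textnormal{SSYT}(\nu)$ has weight $\lambda$ and $Q\in\textnormal{SYT}(\nu)$; the shapes that appear are exactly those $\nu$ with $|\nu|=n$ and $\nu\geq\lambda$ (by the well-known support property of Kostka numbers), each counted with multiplicity $K_{\nu\lambda}$ arising from the choice of $P$. A standard property of RSK for words is that the descent set of $w$ coincides with the descent set of the recording tableau $Q$, so descent counts are preserved in this step as well.

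Third, I would invoke the destandardization bijection of Assaf and Searles recalled before the statement, which restricts to a bijection between $\textnormal{SYT}(\nu)$ with exactly $m$ runs and $\textnormal{QYT}_{=m}(\nu)$. Because a standard tableau with $k$ descents has $k+1$ runs, $\textnormal{SYT}(\nu)$ with $k$ descents correspond to $\textnormal{QYT}_{=k+1}(\nu)$. Composing the three bijections and collecting terms by the common shape $\nu$ yields the desired coefficient identity, and the lemma follows. The only nontrivial points requiring verification are the two descent-preservation claims in steps one and two; everything else is assembly of tools already available in the excerpt.
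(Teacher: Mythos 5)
Your proposal is correct and follows essentially the same route as the paper: reduce to the coefficient-wise identity $A(\lambda,k)=\sum_{\nu\geq\lambda}K_{\nu\lambda}\,\textnormal{QYT}_{=k+1}(\nu)$ and prove it bijectively via RSK, with the paper's sketch leaving implicit exactly the three descent-preserving steps (restricted permutations as words of weight $\lambda$, $\mathrm{Des}(w)=\mathrm{Des}(Q(w))$, and destandardization sending $k$-descent standard tableaux to $\textnormal{QYT}_{=k+1}$) that you spell out.
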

\begin{proofsketch}
By comparing coefficients of ${\alpha + n -1 -k\choose n}$, it is sufficient to show that for a fixed $k$,
$$A(\lambda, k) = \sum_{\substack{|\nu|=n \\ \nu \geq \lambda}}K_{\nu\lambda}\textnormal{QYT}_{=k+1}(\nu).$$
This is proven bijectively with RSK.
\end{proofsketch}

\begin{corollary}
It holds that
$$\sum_{|\lambda| = n} \sum_{k=0}^{n-1} A(\lambda, k){\alpha +n-1-k\choose n} m_\lambda = \sum_{|\nu|=n}\sum_{k=0}^{n-1} \textnormal{QYT}_{=k+1}(\nu){\alpha+n-1-k\choose n} s_\nu.$$
\end{corollary}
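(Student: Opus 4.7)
The plan is to obtain this corollary as an essentially immediate consequence of Lemma \ref{RSKlemma}, together with the classical monomial expansion of the Schur functions. I would multiply both sides of the identity in Lemma \ref{RSKlemma} by $m_\lambda$ and sum over all partitions $\lambda$ of $n$. The left-hand side then becomes exactly the left-hand side of the corollary, so there is nothing further to check there.

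On the right-hand side, after multiplying by $m_\lambda$ and summing, I would swap the order of summation so that $\nu$ is outermost. Because the restriction $\nu \geq \lambda$ is equivalent to $\lambda \leq \nu$, and because $K_{\nu\lambda} = 0$ whenever $\lambda \not\leq \nu$, the inner sum collects into
\begin{equation*}
\sum_{\substack{|\lambda|=n \\ \lambda \leq \nu}} K_{\nu\lambda}\, m_\lambda \;=\; \sum_{|\lambda|=n} K_{\nu\lambda}\, m_\lambda \;=\; s_\nu,
\end{equation*}
by the standard monomial expansion of Schur functions. Substituting this identity back gives precisely the right-hand side of the corollary.

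There is essentially no obstacle here: the content of the statement is already packaged into Lemma \ref{RSKlemma}, and the step from that lemma to this corollary is bookkeeping (interchange of summation, plus recognizing $\sum_\lambda K_{\nu\lambda} m_\lambda = s_\nu$). The only mild care needed is in justifying the interchange of the finite summations over $\lambda$, $\nu$, and $k$, and in noting that the support condition $\nu \geq \lambda$ coming from Lemma \ref{RSKlemma} is compatible with (and in fact redundant given) the vanishing of $K_{\nu\lambda}$ outside the dominance order. Thus the corollary is a direct weighted sum of the identity in Lemma \ref{RSKlemma}, reassembled via the Kostka expansion.
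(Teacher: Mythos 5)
Your proof is correct. It rests on exactly the same two ingredients as the paper's argument --- Lemma \ref{RSKlemma} together with the Kostka expansion $s_\nu = \sum_{\lambda} K_{\nu\lambda} m_\lambda$ --- but you organize the deduction as a single direct interchange of finite sums, whereas the paper's sketch phrases it as an induction down the dominance poset of partitions. Your version is arguably the cleaner packaging: since $K_{\nu\lambda}$ vanishes unless $\lambda \leq \nu$, the dominance restriction appearing in Lemma \ref{RSKlemma} is absorbed automatically into the unrestricted Kostka sum, so no induction is needed and the whole corollary reduces to bookkeeping, exactly as you say.
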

\begin{proofsketch}
Apply induction and Lemma \ref{RSKlemma} to the poset of partitions under dominance order.
\end{proofsketch}

Linking these together and applying Lemma \ref{QYTsymmetry} completes the proof of Theorem \ref{QYTtheorem}, which shows that the Schur expansion of $\tilde{J}_\mu^{(\alpha)}(X)$ is in fact a generating function for quasi-Yamanouchi tableaux up to a constant of $n!$, thus proving Conjecture 1 for the case of $\mu = (n)$. Chen, Yang, and Zhang \cite{CYZ16} adapted a result by Brenti \cite{Br89} to show that the polynomial
$\sum_{T\in \textnormal{SYT}(\lambda)} t^{des(T)}$
has only real zeroes. Using this and the definition of quasi-Yamanouchi tableaux, Theorem \ref{QYTtheorem} also proves the $\mu = (n)$ case of the second part of Conjecture 1.

\subsection{Fundamental quasisymmetric expansions}
One last application of Theorem \ref{QYTtheorem} takes us into a brief digression towards the fundamental quasisymmetric expansion. In the ${\alpha + k \choose n}$ basis, we can obtain the fundamental quasisymmetric expansion of $\tilde{J}_{(n)}^{(\alpha)}(X)$ as a corollary of the following result.

\begin{theorem}\label{QYTqsymexp} It holds that
$$\sum_{\pi \in S_n} t^{des(\pi)} Q_{Des(P(\pi))}(x)= \sum_{|\mu|=n} \sum_{k=0}^{n-1}\textnormal{QYT}_{=k+1}(\mu)t^k s_\mu,$$
where $P(\pi)$ is the insertion tableau of $\pi$ given by RSK.
\end{theorem}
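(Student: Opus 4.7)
The plan is to reorganize the left-hand sum using the RSK correspondence and then recognize the two resulting factors from ingredients already in the preliminaries. Under RSK, the map $\pi \mapsto (P(\pi), Q(\pi))$ is a bijection from $S_n$ onto pairs of standard Young tableaux of the same shape $\mu \vdash n$, and a standard property is that $Des(\pi) = Des(Q(\pi))$, so in particular $des(\pi) = des(Q(\pi))$. Therefore I would rewrite
\[
\sum_{\pi \in S_n} t^{des(\pi)} Q_{Des(P(\pi))}(x) = \sum_{|\mu|=n} \Biggl(\sum_{Q\in \mathrm{SYT}(\mu)} t^{des(Q)}\Biggr)\Biggl(\sum_{P\in \mathrm{SYT}(\mu)} Q_{Des(P)}(x)\Biggr),
\]
exploiting that once $\mu$ is fixed the choices of $P$ and $Q$ are independent.

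The second factor is immediately $s_\mu$ by Assaf's form of Gessel's expansion recalled in the Dual Equivalence subsection, since the dual equivalence class $[T_\mu]$ is all of $\mathrm{SYT}(\mu)$. For the first factor, I would invoke the destandardization bijection of Assaf--Searles: it sends $\mathrm{QYT}_{=m}(\mu)$ to the subset of $\mathrm{SYT}(\mu)$ consisting of tableaux with exactly $m$ runs, as noted right after the proposition on quasi-Yamanouchi tableaux. Because runs in a standard Young tableau are separated by descents, a SYT has $m$ runs exactly when it has $m-1$ descents, so
\[
\sum_{Q\in \mathrm{SYT}(\mu)} t^{des(Q)} = \sum_{k=0}^{n-1} \mathrm{QYT}_{=k+1}(\mu)\, t^k.
\]
Substituting into the factored expression yields the claimed identity.

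There is no real obstacle beyond bookkeeping: the substantive facts (the descent property of RSK, the Gessel/Assaf expansion of $s_\mu$, and the destandardization bijection between $m$-run SYTs and $\mathrm{QYT}_{=m}(\mu)$) are all available. The one spot that requires slight care is the bijective identification of descent count with one less than the run count, which simply reindexes the sum by $k = m-1$ to match the $\mathrm{QYT}_{=k+1}(\mu)$ notation used throughout the paper.
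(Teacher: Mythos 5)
Your proof is correct and follows essentially the same route as the paper's: both group the permutations of $S_n$ by their RSK shape, identify $\sum_{P\in \mathrm{SYT}(\mu)} Q_{Des(P)}(x)$ with $s_\mu$ via the Gessel/Assaf expansion, and convert the descent distribution of the recording tableaux into quasi-Yamanouchi counts via the destandardization bijection. The only cosmetic difference is that you invoke the classical fact $Des(\pi)=Des(Q(\pi))$ directly, where the paper's sketch packages the same grouping in the language of dual equivalence classes and dual equivalence graph theory.
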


\begin{proofsketch}
Connect elements of $S_n$ by edges corresponding to dual equivalence relations, then apply RSK to each element and use the theory of dual equivalence graphs.
\end{proofsketch}

\begin{corollary} \label{jacksqsymmexp}
It holds that
$$\tilde{J}_{(n)}^{(\alpha)}(X) = n!\sum_{\pi \in S_n} {\alpha +n-1-des(\pi)\choose n} Q_{Des(P(\pi))},$$
where $P(\pi)$ is the insertion tableau of $\pi$ given by RSK.
\end{corollary}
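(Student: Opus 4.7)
The plan is to derive this corollary by combining Theorem \ref{QYTtheorem} with Theorem \ref{QYTqsymexp}, using Lemma \ref{QYTsymmetry} as the bridge. Theorem \ref{QYTtheorem} gives the Schur expansion of $\tilde{J}_{(n)}^{(\alpha)}(X)$ in terms of quasi-Yamanouchi tableaux of conjugate shapes, while Theorem \ref{QYTqsymexp} provides an identity between a permutation-indexed fundamental quasisymmetric sum and a Schur sum whose coefficients are generating polynomials for quasi-Yamanouchi tableaux of the same shape. Once these two expressions are aligned, the corollary will follow from a single termwise substitution.

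First I would rewrite Theorem \ref{QYTtheorem} as
$$\tilde{J}_{(n)}^{(\alpha)}(X) = n!\sum_{|\lambda|=n}\sum_{k=0}^{n-1}\textnormal{QYT}_{=k+1}(\lambda'){\alpha+k\choose n}s_\lambda.$$
Applying Lemma \ref{QYTsymmetry} (together with $(\lambda')'=\lambda$) gives $\textnormal{QYT}_{=k+1}(\lambda')=\textnormal{QYT}_{=n-k}(\lambda)$, and setting $j=n-1-k$ rewrites the formula as
$$\tilde{J}_{(n)}^{(\alpha)}(X) = n!\sum_{|\lambda|=n}\sum_{j=0}^{n-1}\textnormal{QYT}_{=j+1}(\lambda){\alpha+n-1-j\choose n}s_\lambda.$$

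Next, I would view Theorem \ref{QYTqsymexp} as an identity of polynomials in $t$ whose coefficients are symmetric functions, and apply the linear operator $t^j\mapsto{\alpha+n-1-j\choose n}$ termwise to both sides. The left-hand side becomes $\sum_{\pi\in S_n}{\alpha+n-1-des(\pi)\choose n}Q_{Des(P(\pi))}(x)$, which is precisely the quasisymmetric sum appearing in the corollary. The right-hand side becomes the Schur expansion displayed above (up to the factor of $n!$), and thus equals $\tilde{J}_{(n)}^{(\alpha)}(X)/n!$. Multiplying through by $n!$ yields the claim.

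The only real subtlety lies in the first step, where the conjugate shape $\lambda'$ coming out of Theorem \ref{QYTtheorem} must be reconciled with the non-conjugate indexing in Theorem \ref{QYTqsymexp}; Lemma \ref{QYTsymmetry} handles this cleanly and forces the reversal of the binomial coefficient from ${\alpha+k\choose n}$ to ${\alpha+n-1-k\choose n}$. Beyond that, the argument is purely reindexing and a term-by-term substitution, so I do not anticipate any serious obstacle.
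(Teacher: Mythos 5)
Your argument is correct and is exactly the route the paper takes: combine Theorem \ref{QYTtheorem} with Theorem \ref{QYTqsymexp}, using Lemma \ref{QYTsymmetry} to pass from $\textnormal{QYT}_{=k+1}(\lambda')$ to $\textnormal{QYT}_{=n-k}(\lambda)$ and reindex the binomial coefficients. The paper's proof is a one-line citation of these three results, and your write-up fills in precisely the intended details.
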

\begin{proof}
Apply Theorems \ref{QYTtheorem} and \ref{QYTqsymexp} and Lemma \ref{QYTsymmetry}.
\end{proof}

This result prompted the following conjecture on the quasisymmetric expansion for general partitions $\mu$. 

\begin{conjecture}
For a partition $\mu$ of size $n$, it holds that
$$\tilde{J}_\mu^{(\alpha)}(X) = \sum_{\pi,\tau \in S_n} {\alpha + n -1 - des(\pi) \choose n} Q_{\sigma(\pi,\tau,\mu)}(x)$$
for some set-valued function $\sigma$ depending on $\pi, \tau,$ and $\mu$ and with image in $\{1,\ldots,n-1\}$.
\end{conjecture}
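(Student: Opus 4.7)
The plan is to reduce to Conjecture~\ref{C1} and then construct $\sigma$ via a count-matching bijection built on Gessel's expansion. Assuming Conjecture~\ref{C1} gives the Schur expansion
$$\tilde J_\mu^{(\alpha)}(X) = \sum_{\lambda \vdash n} \sum_{k=0}^{n-1} a_k(\mu,\lambda) \binom{\alpha+k}{n} s_\lambda(X)$$
with $a_k(\mu,\lambda) \in \mathbb{N}$. Applying Gessel's fundamental expansion $s_\lambda = \sum_{T \in \textnormal{SYT}(\lambda)} Q_{Des(T)}(x)$ to each Schur function and re-indexing by $j = n-1-k$ yields
$$\tilde J_\mu^{(\alpha)}(X) = \sum_{j=0}^{n-1} \binom{\alpha+n-1-j}{n} \sum_{\lambda \vdash n} a_{n-1-j}(\mu,\lambda) \sum_{T \in \textnormal{SYT}(\lambda)} Q_{Des(T)}(x).$$
Proving the conjecture reduces, for each $j$, to finding a bijection from the multiset $\{(\lambda,T,i) : T \in \textnormal{SYT}(\lambda),\, 1 \le i \le a_{n-1-j}(\mu,\lambda)\}$ onto the set $\{(\pi,\tau) \in S_n \times S_n : des(\pi) = j\}$, and then declaring $\sigma(\pi,\tau,\mu) := Des(T)$ for the matched $T$.

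The coarse count-matching step is automatic. Corollary~\ref{eulerianstirlingcor1} gives $\sum_\lambda a_{n-1-j}(\mu,\lambda)|\textnormal{SYT}(\lambda)| = n!\,A(n,n-1-j)$, and the classical Eulerian symmetry $A(n,j) = A(n,n-1-j)$ turns this into $n!\,A(n,j) = |\{(\pi,\tau) \in S_n^2 : des(\pi) = j\}|$. So for every $\mu$ and every $j$ the two sides have the same total number of $Q$-summands, and the counting obstruction vanishes. For the base case $\mu = (n)$, the destandardization map of Assaf--Searles composed with RSK supplies a canonical such bijection and recovers Corollary~\ref{jacksqsymmexp} with $\sigma(\pi,\tau,(n)) = Des(P(\pi))$, independent of $\tau$; for Yoo's cases (such as two-column $\mu$) the explicit monomial-expansion formulas should allow one to push through Gessel and RSK to specify $\sigma$ in closed form.

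The main obstacle is the non-constructive, conditional nature of the outline above: it invokes Conjecture~\ref{C1} and then settles for an abstract bijection. The interesting content of the conjecture is the existence of a \emph{natural} $\sigma$ defined uniformly in $\mu$, and producing such a $\sigma$ would, as a byproduct, yield the combinatorial interpretation of $a_k(\mu,\lambda)$ that remains the central open problem of the paper. A plausible strategy is to take the Knop--Sahi combinatorial formula for $J_\mu^{(\alpha)}$ and build the pair $(\pi,\tau)$ directly from an admissible filling of $\mu$ — with $\pi$ recording the standardization of the filling, so that $des(\pi)$ controls the binomial factor, and $\tau$ encoding position-dependent data that tracks how the filling contributes to each $s_\lambda$ via an RSK-like insertion — then reading $\sigma(\pi,\tau,\mu)$ off the descent set of the resulting insertion tableau. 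Verifying that such a construction satisfies the required multiset identity (and hence implies Conjectures~\ref{C1} and~\ref{C2} as corollaries) is where the real combinatorial work lies.
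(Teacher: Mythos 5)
This statement is a conjecture that the paper does not prove; the only justification the paper offers is the remark immediately following it, which is precisely your argument: assuming Conjecture~\ref{C1}, Corollary~\ref{eulerianstirlingcor1} together with Gessel's expansion of $s_\lambda$ into fundamental quasisymmetric functions (and the Eulerian symmetry $A(n,j)=A(n,n-1-j)$) shows the identity holds for \emph{some} $\sigma$, without identifying it. Your conditional count-matching argument is correct and matches the paper's approach, and you rightly identify that the genuine open content is a natural, uniformly defined $\sigma$.
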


Corollary \ref{jacksqsymmexp} proves this conjecture in the case of $\mu=(n)$, where $\sigma(\pi,\tau,(n)) = Des(P(\pi))$, and Proposition \ref{eulerianstirlingprop} proves it in the case of $\mu = (1^n)$, where $\sigma(\pi,\tau,(1^n)) = \{1,\ldots,n-1\}$ for all $\pi,\tau \in S_n$. Furthermore, if we momentarily assume that the Jack polynomials are Schur positive in this basis, Corollary \ref{eulerianstirlingcor1} along with the expansion of Schurs into fundamental quasisymmetrics shows that this conjecture is true in general for some $\sigma$, although it does not tell us what $\sigma$ should be.
Finally, while the fundamental quasisymmetric expansion would be interesting in its own right, it may also lead to a proof of Schur positivity by a generalization of the method used in Theorem \ref{QYTqsymexp}. Corollary \ref{jacksqsymmexp} and Conjecture 5 have the following analogous conjectures in the ${\alpha \choose k} k!$ basis, where $B_n$ is the set of set partitions of $\{1,\ldots,n\}$.

\begin{conjecture}\label{conjbasis2}
For a partition $\mu$ of size $n$, it holds that
$$\tilde{J}_\mu^{(\alpha)}(X) = \sum_{\substack{\pi \in S_n \\ \beta \in B_n}} {\alpha \choose |\beta|}|\beta|! Q_{\rho(\pi,\beta,\mu)}(x)$$
for some set-valued function $\rho$ depending on $\pi, \beta$, and $\mu$ with image in $\{1,\ldots,n-1\}$.
\end{conjecture}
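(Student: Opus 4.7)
The plan is to mirror the strategy indicated parenthetically in the paper's commentary preceding Conjecture \ref{conjbasis2} for its $\binom{\alpha+k}{n}$-basis analog, but using the Stirling-flavored corollary following Corollary \ref{eulerianstirlingcor1} in place of Corollary \ref{eulerianstirlingcor1} itself. First I would assume Conjecture \ref{C2}, which gives $b_{n-k}(\mu,\lambda) \in \mathbb{N}$ for every $\lambda \vdash n$. Writing out the Schur expansion and substituting Gessel's fundamental quasisymmetric expansion $s_\lambda = \sum_{T \in \textnormal{SYT}(\lambda)} Q_{Des(T)}(x)$ produces
$$\tilde{J}_\mu^{(\alpha)}(X) = \sum_{k=1}^{n} \binom{\alpha}{k}\, k! \sum_{|\lambda|=n}\sum_{T\in \textnormal{SYT}(\lambda)} b_{n-k}(\mu,\lambda)\, Q_{Des(T)}(x).$$

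The next step is a counting check. At each fixed $k$, summing the multiplicities of the $Q_{\sigma}$'s yields
$$\sum_{|\lambda|=n} K_{\lambda(1^n)}\, b_{n-k}(\mu,\lambda) = n!\, S(n,k),$$
which is exactly the Stirling-number corollary stated immediately after Corollary \ref{eulerianstirlingcor1}. On the other hand, the number of pairs $(\pi,\beta) \in S_n \times B_n$ with $|\beta|=k$ is also $n!\, S(n,k)$, since there are $n!$ permutations and $S(n,k)$ set partitions of $\{1,\ldots,n\}$ into $k$ blocks. Any bijection $\Phi_k$ between these two multisets of cardinality $n!\, S(n,k)$ — the left side indexed by triples $(\lambda, T, j)$ with $T\in \textnormal{SYT}(\lambda)$ and $1 \le j \le b_{n-k}(\mu,\lambda)$, the right by the pairs $(\pi,\beta)$ — then yields a valid $\rho$ by setting $\rho(\pi,\beta,\mu) := Des(T)$, where $T$ is the tableau component of $\Phi_k^{-1}(\pi,\beta)$.

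The main obstacle is Conjecture \ref{C2} itself: without Schur positivity in the $\binom{\alpha}{k}k!$ basis, one cannot distribute the coefficients as nonnegative weights on standard Young tableaux, and the construction breaks down. A subtler secondary difficulty is that the above argument only produces an \emph{existence} statement for $\rho$; the bijections $\Phi_k$ are arbitrary, and there is no reason the resulting $\rho$ should be combinatorially meaningful or specialize correctly to the known cases $\mu=(n)$ and $\mu=(1^n)$. Identifying a canonical $\rho$ — one recovering the two known cases and suggesting a Stirling-refined analogue of Theorem \ref{QYTqsymexp} — would likely require a direct combinatorial model for the $b_{n-k}(\mu,\lambda)$, which in turn could lead to an independent proof of Conjecture \ref{C2} via an adaptation of the dual equivalence machinery used in Theorem \ref{QYTqsymexp}.
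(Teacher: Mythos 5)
The statement you are asked about is an open conjecture in the paper; the authors give no proof, only the closing remark that, assuming Schur positivity in the ${\alpha \choose k}k!$ basis (Conjecture \ref{C2}), the existence of some $\rho$ follows from the Stirling-number corollary together with Gessel's expansion of Schur functions into fundamental quasisymmetric functions. Your conditional argument is precisely that remark made explicit — the coefficient comparison giving $\sum_{\lambda} K_{\lambda(1^n)} b_{n-k}(\mu,\lambda) = n!\,S(n,k)$ and the matching count of pairs $(\pi,\beta)$ with $|\beta|=k$ are both correct — and you rightly flag that this rests on the unproven Conjecture \ref{C2} and yields only an uninformative existence statement for $\rho$ rather than the canonical, combinatorially meaningful $\rho$ (as in Conjecture \ref{conjbasis2muequalsn}) that the conjecture is really after.
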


For the $\mu=(n)$ case, we first define a function. Given $\pi \in S_n$ and $\beta \in B_n$, define $f_\beta(\pi)$ to be a rearrangement of $\pi$ so that if $\{b_1,\ldots,b_k\}\in \beta$, then $b_1,\ldots,b_k$ appear in increasing order in $f_\beta(\pi)$ without changing the position of the subsequence. For example, given $\beta = \{\{1,4\},\{2,3,5\}\}$ and $\pi = 24531$, $f_\beta(\pi) = 21354$.

\begin{conjecture}\label{conjbasis2muequalsn}
It holds that
$$\tilde{J}_{(n)}^{(\alpha)}(X) = \sum_{\substack{\pi \in S_n \\ \beta \in B_n}} {\alpha \choose |\beta|}|\beta|! Q_{Des(P(f_\beta(\pi)))}(x),$$
where $P(f_\beta(\pi))$ is the insertion tableau of $f_\beta(\pi)$ given by RSK.
\end{conjecture}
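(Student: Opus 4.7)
The plan is to derive Conjecture \ref{conjbasis2muequalsn} from Corollary \ref{jacksqsymmexp} by changing from the basis $\{\binom{\alpha+k}{n}\}$ to $\{\binom{\alpha}{k}k!\}$. Applying the Vandermonde--Chu identity $\binom{\alpha+m}{n}=\sum_{j=0}^n\binom{\alpha}{j}\binom{m}{n-j}$ to each summand of Corollary \ref{jacksqsymmexp} produces
$$\tilde{J}_{(n)}^{(\alpha)}(X)=\sum_{j=0}^n\binom{\alpha}{j}\,j!\cdot\frac{n!}{j!}\sum_{\pi\in S_n}\binom{n-1-des(\pi)}{n-j}Q_{Des(P(\pi))}(x),$$
so the conjecture reduces, for each fixed $j$, to the quasisymmetric identity
$$\frac{n!}{j!}\sum_{\pi\in S_n}\binom{n-1-des(\pi)}{n-j}Q_{Des(P(\pi))}=\sum_{\substack{\pi'\in S_n\\\beta\in B_n,\,|\beta|=j}}Q_{Des(P(f_\beta(\pi')))}. \quad (\ast)$$

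To prove $(\ast)$ I would give combinatorial readings of both sides. The binomial $\binom{n-1-des(\pi)}{n-j}$ is the number of subsets of $[n-1]$ of size $j-1$ containing $Des(\pi)$, so such pairs are in bijection with ordered set partitions $(V_1,\dots,V_j)$ of $[n]$, where $\pi=V_1\cdots V_j$ is obtained by concatenating each $V_i$ in increasing order. On the right-hand side, the map $(\pi',\beta)\mapsto\sigma:=f_\beta(\pi')$ has preimage of size $\prod_{B\in\beta}|B|!$ whenever every block's values appear in increasing order in $\sigma$, and is empty otherwise. Applying RSK to both sides of $(\ast)$ and using $\sum_{T\in \textnormal{SYT}(\mu)}Q_{Des(T)}=s_\mu$ together with the classical fact $des(\pi)=des(Q(\pi))$ reduces $(\ast)$ to a shape-by-shape identity on Schur coefficients. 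The left-hand Schur coefficient of $s_\mu$ becomes $\frac{n!}{j!}\sum_{Q\in \textnormal{SYT}(\mu)}\binom{n-1-des(Q)}{n-j}$, and via Lemma \ref{QYTsymmetry} and the run-counting interpretation of quasi-Yamanouchi tableaux this agrees with the coefficient predicted by Theorem \ref{QYTtheorem} after the same change of basis, which is a reassuring consistency check.

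The main obstacle is computing the right-hand Schur coefficient: one must show that $\sum_{\beta\text{ valid for }\sigma,\,|\beta|=j}\prod_B|B|!$, summed over $\sigma$ in any fixed dual equivalence class (equivalently over $\sigma$ with a given pair $(\text{shape}(P(\sigma)),Des(P(\sigma)))$), produces the same quantity as on the left. I expect this to require extending the dual equivalence graph argument of Theorem \ref{QYTqsymexp} so that it also tracks the block-size multiset of $\beta$, most naturally via an RSK-style insertion on pairs $(\pi',\beta)$ that records both the permutation and the set partition in a compatible pair of tableaux, making the required multiplicities visible.
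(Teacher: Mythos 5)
The statement you are trying to prove is left as an open conjecture in the paper (Conjecture~\ref{conjbasis2muequalsn}); no proof is given there, so your attempt can only be judged on its own merits. Your reduction is sound as far as it goes: applying $\binom{\alpha+m}{n}=\sum_j\binom{\alpha}{j}\binom{m}{n-j}$ to Corollary~\ref{jacksqsymmexp} and matching coefficients of the linearly independent polynomials $\binom{\alpha}{j}j!$ correctly reduces the conjecture to the identity $(\ast)$ for each $j$; the reading of $\binom{n-1-des(\pi)}{n-j}$ as counting size-$(j-1)$ supersets of $Des(\pi)$ in $[n-1]$, hence ordered set partitions into $j$ increasing blocks, is right; and the fibre count $\prod_{B\in\beta}|B|!$ for $f_\beta$ is right. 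The consistency check against Theorem~\ref{QYTtheorem} only confirms that the \emph{symmetric function} underlying both sides of $(\ast)$ would agree if the right-hand side were symmetric and had the predicted Schur expansion --- it does not advance the proof.

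The genuine gap is exactly where you place it, and it is the entire content of the conjecture. On the left of $(\ast)$, RSK turns the sum into $\frac{n!}{j!}\sum_\mu\bigl(\sum_{Q\in\mathrm{SYT}(\mu)}\binom{n-1-des(Q)}{n-j}\bigr)s_\mu$ because the weight depends only on the recording tableau; this works. On the right, the weight attached to a permutation $\sigma$, namely $\sum_{\beta\,\mathrm{valid\ for\ }\sigma,\,|\beta|=j}\prod_{B\in\beta}|B|!$, is \emph{not} a function of $Q(\sigma)$ or of $Des(P(\sigma))$ alone, so it is not even clear a priori that the right-hand side is a symmetric function, let alone that its Schur coefficients match. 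Your proposed remedy --- an RSK-style insertion on pairs $(\pi',\beta)$ or an extension of the dual equivalence argument of Theorem~\ref{QYTqsymexp} that tracks the block structure of $\beta$ --- is a plausible strategy, but it is stated as an expectation rather than carried out, and constructing such a correspondence (in particular, showing the weighted count is constant along dual equivalence classes) is precisely the open problem. As written, the proposal is a reformulation of the conjecture into the equivalent family of identities $(\ast)$, not a proof.
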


Proposition \ref{eulerianstirlingprop} also proves the $\mu = (1^n)$ case here, and we can make similar remarks as above. That is, if we assume Schur positivity, that Conjecture \ref{conjbasis2} is true for some $\rho$ and that the quasisymmetric expansion could help prove Schur positivity in this basis.

\subsection{Rook Boards}
Returning to the problem of Schur positivity, we also found some success approaching the problem with rook boards. We first use Proposition \ref{rookboardprop} to obtain a combinatorial interpretation of the coefficient of $s_\lambda$ in our binomial bases when $\mu = \lambda$ for a hook shape. In general for $\mu = \lambda$, the coefficient of $s_\mu$ is the same as the coefficient of $m_\mu$ in the monomial expansion, so we can obtain from the combinatorial formula for the monomial expansion \cite{KnSa97} that
$$\langle \tilde{J}_\mu^{(\alpha)}(X), s_{\mu}(X)\rangle = \prod_{s\in \mu} (arm(s) + \alpha (leg(s) +1)).$$
When $\mu = (n-\ell, 1^\ell)$ is a hook shape, this product becomes
\begin{align*}
\langle \tilde{J}_\mu^{(\alpha)}(X),&\ s_{\mu}(X)\rangle = \ell!\alpha^\ell ((\ell+1)\alpha + (n-1))(\alpha + (n-2))\cdots(\alpha + 1) \alpha\\
=& \ell\cdot \ell!(\alpha + (n-\ell-2))\cdots(\alpha + 1)\alpha^{\ell+2}+ \ell!(\alpha+(n-\ell-1))\cdots(\alpha + 1)\alpha^{\ell+1},
\end{align*}
then applying Proposition \ref{rookboardprop} gives the following result.

\begin{proposition} For $\mu = \lambda = (n-\ell, 1^\ell)$, we have
\begin{align*}
\langle \tilde{J}_\mu^{(\alpha)}(X),s_\mu \rangle &= \sum_{k=0}^n (\ell \cdot \ell! h_k(B(c_1,\ldots,c_n)) + \ell! h_k(B(d_1,\ldots,d_n))) {\alpha + k \choose n }\\
& = \sum_{k=0}^n (\ell \cdot \ell! r_k(B(c_1,\ldots,c_n)) + \ell! r_k(B(d_1,\ldots,d_n))) {\alpha \choose k }k!\\
\end{align*}
where $c_1 = c_2 = \cdots = c_{n-\ell-1} = n-\ell-2$ and $c_{n-\ell+i} = n-\ell-1+i$ for $0 \geq i \geq \ell$ and $d_1 = d_2 = \cdots = d_{n-\ell} = n-\ell-1$ and $d_{n-\ell+i} = n-\ell-1+i$ for $1 \geq i \geq \ell$.
\end{proposition}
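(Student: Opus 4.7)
The plan is to apply Proposition \ref{rookboardprop} directly to the two-term factored expression for $\langle \tilde{J}_\mu^{(\alpha)}(X), s_\mu \rangle$ displayed immediately before the statement. Each term is a polynomial in $\alpha$ of degree $n$, and the task is to exhibit it as $\prod_{i=1}^n (\alpha + c_i - i + 1)$ for the appropriate Ferrers board. Once that is done, Proposition \ref{rookboardprop} immediately converts each product into both the ${\alpha + k \choose n}$ and the ${\alpha \choose k} k!$ expansions, and the linear combination with coefficients $\ell \cdot \ell!$ and $\ell!$ yields both claimed identities at once.

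First I will check that the sequences $(c_i)$ and $(d_i)$ are weakly increasing with largest entry $n-1 \leq n$, so they legitimately define Ferrers boards to which Proposition \ref{rookboardprop} applies. For $(c_i)$, the first $n-\ell-1$ entries are constant at $n-\ell-2$, after which the entries climb by one through $n-\ell-1, n-\ell, \ldots, n-1$; the sequence $(d_i)$ behaves similarly, with the constant block one entry longer and sitting at height $n-\ell-1$.

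Next I will compute $\prod_{i=1}^n (\alpha + c_i - i + 1)$ by splitting the index range at $i = n-\ell$. For $1 \leq i \leq n-\ell-1$, substituting $c_i = n-\ell-2$ gives factors $\alpha + n-\ell-1-i$, which run over $\alpha+n-\ell-2, \alpha+n-\ell-3, \ldots, \alpha+1, \alpha$. For $n-\ell \leq i \leq n$, the prescription $c_{n-\ell+j} = n-\ell-1+j$ forces $c_i - i + 1 = 0$, so each factor collapses to $\alpha$, contributing $\alpha^{\ell+1}$. Multiplying together reproduces $(\alpha+n-\ell-2)\cdots(\alpha+1)\alpha^{\ell+2}$, the first summand up to the prefactor $\ell \cdot \ell!$. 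The verification for $(d_i)$ is analogous: the constant stretch is one entry longer and the $\alpha$-tail one entry shorter, giving $(\alpha+n-\ell-1)\cdots(\alpha+1)\alpha^{\ell+1}$, the second summand up to $\ell!$. The whole argument is bookkeeping, and the only care needed is to keep the two index ranges and the two different column heights straight when matching factors to the boards; there is no substantive obstacle.
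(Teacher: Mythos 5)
Your proof is correct and follows the paper's own route exactly: the paper derives the two-term factorization in the text preceding the proposition and then simply invokes Proposition \ref{rookboardprop}, with the matching of factors to the boards $B(c_1,\ldots,c_n)$ and $B(d_1,\ldots,d_n)$ left as the same bookkeeping you carry out. Your index-by-index verification that $c_i - i + 1$ and $d_i - i + 1$ reproduce the two products is the only content the paper omits, and you have it right.
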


\begin{example}$B(c_1,\ldots,c_4)$ and $B(d_1,\ldots,d_4)$ for $\mu=\lambda=(3,1)$.
\begin{displaymath}
\tableau{
\ & \ &\ &\ \\
\ & \ & \ & \graybox\\
\ & \ & \graybox & \graybox \\
\graybox & \graybox & \graybox & \graybox \\}
\ \ \ \ \ 
\tableau{
\ & \ &\ &\ \\
\ & \ & \ & \graybox\\
\graybox & \graybox & \graybox & \graybox \\
\graybox & \graybox & \graybox & \graybox \\}
\end{displaymath}
\end{example}

This approach also yields a combinatorial interpretation for both bases in the case of $\mu = (n)$. We can obtain $J_{\mu}^{(1/\alpha)}(X)$ via the specialization\\

\noindent\resizebox{1.0\textwidth}{!}{
$J_{\mu}^{(1/\alpha)}(X) = \lim_{t\rightarrow 1} \frac{J_{\mu}(X;t^{1/\alpha},t)}{(1-t)^n} = \lim_{q\rightarrow 1} \frac{J_{\mu}(X;q,q^\alpha)}{(1-q^\alpha)^n}=\lim_{q\to 1} \frac{J_{\mu}(X;q,q^\alpha)}{(1-q)^n}\frac{(1-q)^n}{(1-q^\alpha)^n}=\frac{1}{\alpha^n}\lim_{q \to 1} \frac{J_{\mu}(X;q,q^\alpha)}{(1-q)^n}$}
\\

\noindent so that 
$$\tilde{J}_\mu^{(\alpha)}(X) = \lim_{q\to 1} \frac{J_{\mu}(X;q,q^\alpha)}{(1-q)^n}.$$
Then when $\mu = (n)$, we can apply the limit as $q \to 1$ 
to a result of Yoo \cite[Theorem 3.2]{Yoo12} to obtain
$$\lim_{q\to 1}\frac{J_{(n)}(X;q,q^\alpha)}{(1-q)^n} = \sum_{|\lambda|=n}s_{\lambda} K_{\lambda,1^n} \prod_{(i,j)\in \lambda}(\alpha+i-j),$$
where $(i,j)\in \lambda$ refers to cells of the diagram of $\lambda$ identified with their Cartesian coordinates. Arrange the values of $i-j$ in non-increasing order and rewrite to get the desired $\prod_{i=1}^n (\alpha + c_i-i+1)$ form. It is clear that for any partition $\lambda$, this produces a sequence $0\leq c_1 \leq \cdots \leq c_n \leq n$, so we can apply Proposition \ref{rookboardprop} again to obtain the following.

\begin{theorem}\label{rookboardtheorem}
It holds that 
$$\langle \tilde{J}_{(n)}^{(\alpha)}(X),s_\lambda\rangle = \sum_{k=0}^n K_{\lambda,1^n} h_k(B(c_1,\ldots,c_n)) {\alpha + k \choose n} = \sum_{k=0}^n K_{\lambda,1^n} r_{n-k}(B(c_1,\ldots,c_n)) {\alpha \choose k}k!,$$
with $c_1,\ldots,c_n$ given above.
\end{theorem}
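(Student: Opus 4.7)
The plan is to derive the theorem by feeding Yoo's content-product formula, which is displayed in the paragraph preceding Theorem \ref{rookboardtheorem}, directly into Proposition \ref{rookboardprop}. Taking the Hall inner product of the displayed limit identity with $s_\lambda$ immediately gives
$$\langle \tilde{J}_{(n)}^{(\alpha)}(X), s_\lambda \rangle = K_{\lambda,1^n} \prod_{(i,j) \in \lambda}(\alpha + i - j),$$
so the entire task reduces to rewriting the content product as $\prod_{i=1}^n(\alpha + c_i - i + 1)$ for some sequence $c_1, \ldots, c_n$ meeting the hypotheses of Proposition \ref{rookboardprop}.

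Next I would list the multiset of contents $\{i - j : (i,j) \in \lambda\}$ in non-increasing order as $e_1 \geq e_2 \geq \cdots \geq e_n$ and set $c_i := e_i + i - 1$, which by construction makes $\prod_{(i,j)\in\lambda}(\alpha + i - j) = \prod_{i=1}^n(\alpha + c_i - i + 1)$. The key structural observation is that the set of contents appearing in the diagram of $\lambda$ is exactly the consecutive integer interval $\{-(\lambda_1-1), \ldots, \ell(\lambda)-1\}$, because the bottom row realizes all contents from $0$ down to $-(\lambda_1-1)$ and the left column realizes all contents from $0$ up to $\ell(\lambda)-1$. Consequently the sorted sequence $e_i$ has consecutive differences $e_{i+1} - e_i \in \{-1, 0\}$, from which $c_{i+1} - c_i = 1 + (e_{i+1} - e_i) \in \{0, 1\}$, so $c$ is weakly increasing. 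The endpoints $c_1 = \ell(\lambda) - 1 \geq 0$ and $c_n = n - \lambda_1 \leq n - 1$ are immediate, so $B(c_1, \ldots, c_n)$ is a valid Ferrers board inside the $n \times n$ grid.

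With this set-up in place, Proposition \ref{rookboardprop} applies verbatim to $\prod_{i=1}^n(\alpha + c_i - i + 1)$, yielding both the ${\alpha + k \choose n}$ expansion in terms of $h_k(B)$ and the ${\alpha \choose k} k!$ expansion in terms of $r_{n-k}(B)$; multiplying through by the Kostka factor $K_{\lambda,1^n}$ recovers the two stated identities simultaneously. The main obstacle is modest: once Yoo's formula has been invoked, the only substantive step is the contiguity claim for partition contents, which verifies that the reordered sequence is genuinely a Ferrers column-height vector. I would flag that this contiguity is special to straight shapes, so one should not expect the argument to generalize directly to skew or more general $\mu$ without additional input.
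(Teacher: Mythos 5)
Your proof follows the paper's argument exactly: take the Hall inner product of the limit of Yoo's formula with $s_\lambda$, sort the contents into the form $\prod_i(\alpha+c_i-i+1)$, and feed the product into Proposition \ref{rookboardprop}; your contiguity argument for the contents is a welcome justification of the step the paper dismisses as ``clear.'' The only slip is a transposed reading of the Cartesian convention for $(i,j)$ --- per the paper's example for $\lambda=(3,2)$ the contents $i-j$ run from $1-\ell(\lambda)$ up to $\lambda_1-1$, so $c_1=\lambda_1-1$ and $c_n=n-\ell(\lambda)$ rather than the endpoints you name --- but since both readings yield a gapless interval containing $0$, this does not affect the validity of the argument.
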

\begin{example} The values $i-j$ and $B(c_1,\ldots,c_5)$ for $\lambda = (3,2)$.
 \begin{displaymath}
    \raisebox{-45pt}{\tableau{
    -1 & 0 \\
    0 & 1 & 2 \\
    }}\ \ \ \ \ 
    \tableau{
    \ & \ & \ & \ & \ \\
    \ & \ & \ & \ & \ \\
    \ & \ & \ & \graybox & \graybox \\
    \graybox & \graybox & \graybox & \graybox & \graybox \\
    \graybox & \graybox & \graybox & \graybox & \graybox \\}
    \end{displaymath}
\end{example}

In \cite{HOW} it is shown that the rook and hit polynomials of Ferrers boards have only real zeros, so the two results of this section also prove the second part of Conjecture \ref{C1}  for these special cases. 
We note that Theorem \ref{rookboardtheorem} provides a very different looking combinatorial interpretation to the one seen in Theorem \ref{QYTtheorem} for the ${\alpha + k \choose n}$ basis. One path that we plan to pursue next is to find a bijection between the rook board interpretation of Theorem \ref{rookboardtheorem} and the tableau interpretation of Theorem \ref{QYTtheorem}, which we hope will lead to a tableau interpretation for the ${\alpha\choose k }k!$ basis.

%\section{Acknowledgements}
%We would like to thank Greta Panova, Emily Sergel, and Andy Wilson for their many helpful discussions and suggestions.

\printbibliography

\end{document}